\documentclass[11pt]{amsart}

\usepackage{fullpage,graphicx,amsfonts,amssymb,amsmath,amsthm}
\usepackage[all]{xy}
\usepackage[left=.8in,top=.8in,bottom=.8in,right=.8in,letterpaper]{geometry} 
\usepackage{mathtools}
\usepackage{graphicx}
\graphicspath{ {images/} }
\usepackage{enumerate}
\usepackage{hyperref} 
\usepackage{setspace}
\usepackage{amssymb} 
\usepackage{ esint }

\allowdisplaybreaks
\theoremstyle{plain} 
\newtheorem{theorem}    {Theorem}

\newtheorem{lemma}      [theorem]{Lemma}
\newtheorem{corollary}  [theorem]{Corollary}
\newtheorem{proposition}[theorem]{Proposition}

\theoremstyle{definition}
\newtheorem{definition} [theorem]{Definition}

\newtheorem{assumption}   [theorem]{Assumption}

\theoremstyle{remark}
\newtheorem{remark}              {Remark}

\renewcommand{\Re}{\operatorname{Re}}

\usepackage{url}

\begin{document} 

\title{Monotonicity formulas in positively curved settings with
applications to two-phase free boundary problems}

\author{Johannes Hosle \\ 09/12/2026}

\address{Department of Mathematics, Massachusetts Institute of Technology}
\email{jhosle@mit.edu}
\maketitle 

\begin{abstract}
Inspired by the monotonicity formula of Alt, Caffarelli, and Friedman, we consider a natural variant of the ACF functional in positively curved 2-dimensional settings where we integrate over sublevel sets of Green's function rather than disks. We prove sharp almost-monotonicity formulas for our new functional in the case of convex planar domains (both with the pole on the boundary and in the interior) and in the setting of 2-dimensional complete manifolds with Euclidean volume growth and nonnegative Gaussian curvature. The tools include the Schwarz-Christoffel formula and Riesz decomposition using conformal coordinates. As a consequence, using quasiconformal estimates, we give a new Lipschitz bound in the manifold setting for minimizers of the two-phase free boundary problem of Alt, Caffarelli, and Friedman, with the constant depending on only the total integral curvature, in contrast to work of Teixeira and Zhang, which gives constants depending on pointwise bounds for the Riemann curvature tensor and its derivatives. Our methods also recover the Lipschitz bound up to a Neumann boundary of Gemmer, Moon, and Raynor in the planar convex domain case.
\end{abstract}

\section{Introduction}

Inspired by the monotonicity formula of Alt, Caffarelli, and Friedman \cite{ACF84}, we introduce and study a new functional $\Phi$, where we integrate over sublevel sets of Green's function rather than balls as in the case of the Alt-Caffarelli-Friedman functional $\Phi_{\mathbb{R}^n}$. Recall that for $u_1, u_2 \in H^1_{\text{loc}}(\mathbb{R}^n)$, the ACF functional $\Phi_{\mathbb{R}^n}$, centered at $x_0 \in \mathbb{R}^n$, is defined by \begin{align}\label{Phi}
    \Phi_{\mathbb{R}^n}(r) &= \frac{1}{r^4} \int_{B_r(x_0)} \frac{|\nabla u_1(x)|^2}{|x-x_0|^{n-2}} \,dx \int_{B_r(x_0)} \frac{|\nabla u_2(x)|^2}{|x-x_0|^{n-2}} \,dx,
\end{align} where $B_r(x_0)$ is open ball of radius $r$ centered at $x_0$. The monotonicity of $\Phi_{\mathbb{R}^n}$ (for nonnegative subharmonic $u_1, u_2$ satisfying $u_1 u_2 = 0$) was proven and used by Alt, Caffarelli, and Friedman \cite{ACF84} to prove interior Lipschitz regularity for minimizers for the two-phase free boundary problem. In this work, we are motivated by the desire to prove Lipschitz regularity up to the Neumann boundary of a convex body $\Omega$, without requiring any further regularity assumptions on $\Omega.$ Since $|x|^{2-n}$ is the Green's function on $\mathbb{R}^n$ for $n \ge 3$, we believe the functional $\Phi$ is natural and of independent interest. 

The first setting we consider is an unbounded open convex set $\Omega$ in $\mathbb{R}^2$ containing a sector. Let $G$ be a Neumann Green's function on $\Omega$ with pole $p \in \overline{\Omega}$. We use the convention that $G(x) \sim \log |x - p|$ as $x\to p$. Let $\Omega_r := \{x \in \Omega: G(x) < \log r\}$ for $r \in (0, \infty)$. Let $u_1, u_2 \in H_{\text{loc}}^1({\Omega})$, and define \begin{equation}\label{Phi2D}
    \Phi(r) := \frac{1}{|\Omega_r|^2}\int_{\Omega_r}|\nabla u_1|^2 \,dx \int_{\Omega_r} |\nabla u_2|^2 \,dx,
\end{equation} where $|\Omega_r|$ denotes the area of $\Omega_r$. Since we are in dimension $2$, the Green's function does not appear as a factor in the integrand, but in the level sets that define the domain of integration, consistent with~\eqref{Phi}, where the $\log |x|$ weight is absent in dimension $n = 2.$ Our first main result is an almost-monotonicity formula for $\Phi$ in this setting.

\begin{theorem}\label{planaralmostmonPoleBoundaryChapter1} Let $\Omega$ be an unbounded, open convex set in $\mathbb{R}^2$ containing a sector of opening angle $\pi \alpha > 0$. Let $u_1, u_2 \in H^1_{\text{loc}}({\Omega})$ be continuous, nonnegative, and subharmonic functions in $\Omega$, with vanishing product $u_1u_2$. Let $p \in \overline{\Omega}$ with $u_1(p) = u_2(p) = 0$, and assume the weak Neumann boundary conditions $\partial_{\nu} u_i = 0$ on $\partial \Omega$. Let $G$ be the conformal Neumann Green's function with pole at $p$. Then, the function $\Phi$ \eqref{Phi2D} satisfies\begin{align*}
    \int_{\tau}^{T} \frac{\Phi'(r)}{\Phi(r)} \,dr &\ge -C + 2\log \alpha,
\end{align*} uniformly in $0 < \tau < T < \infty$, for an absolute constant $C \in \mathbb{R}.$
\end{theorem}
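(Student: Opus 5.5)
Since the problem is two-dimensional, the plan is to use conformal invariance: pull everything back to a model domain, where the usual ACF argument applies with an explicit conformal weight. Let $F:\Omega\to\mathbb{H}$ (or to the disc-complement model) be a Riemann map sending $p$ to $0$ and $\infty$ to $\infty$. If $p\in\partial\Omega$ we use the upper half-plane, and the Neumann Green's function becomes $G = \log|F| + c$. If $p$ is interior we use the whole plane, where $F$ is a conformal map of the Neumann-doubled picture, and $G=\log|F|+c$ again. In either case $\Omega_r = F^{-1}(\{|w|<r e^{-c}\}\cap \mathbb{H})$, so $\Omega_r$ corresponds to a half-disc $D_\rho$. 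Dirichlet energy is conformally invariant, so
\[
\int_{\Omega_r}|\nabla u_i|^2 = \int_{D_\rho}|\nabla v_i|^2, \qquad v_i=u_i\circ F^{-1}.
\]
The functions $v_i$ are again nonnegative and subharmonic with disjoint supports, and they satisfy Neumann conditions on $\mathbb{R}$. Reflecting evenly, they become subharmonic functions on the whole plane, and the classical planar ACF argument gives monotonicity of
\[
\rho^{-4}\int_{D_\rho}|\nabla v_1|^2\int_{D_\rho}|\nabla v_2|^2 .
\]
The planar ACF formula in $n=2$ has no weight, and the reflection preserves the nodal-set eigenvalue estimate on the circle. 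The whole difficulty is therefore shifted to the area factor. We have $|\Omega_r| = \int_{D_\rho}|(F^{-1})'|^2$, and we must compare $|\Omega_r|^2$ with $\rho^4$.

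Writing $\Phi(r)=\Phi_0(\rho)\,\rho^4/|\Omega_r|^2$ with $\Phi_0$ nondecreasing, it suffices to show that
\[
\int_\tau^T \frac{d}{dr}\log\frac{\rho^4}{|\Omega_r|^2}\,dr \ge -C+2\log\alpha ,
\]
that is,
\[
\frac{A(T)}{T^2}\le e^{C}\alpha^{-1}\,\frac{A(\tau)}{\tau^2},\qquad A(r)=|\Omega_r|,
\]
after rescaling $\rho\sim r$. So the key step is a two-sided bound
\[
c\,\alpha\, r^2 \le |\Omega_r| \le C\, r^2
\]
that is uniform in $r$, or at least a quasi-monotonicity of $|\Omega_r|/r^2$ of that form. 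To get it I would use the Schwarz–Christoffel representation. For a convex polygon, $(F^{-1})'(w)=K\prod (w-a_k)^{\beta_k-1}$ with exterior turning angles summing to $\pi(1-\alpha_\infty)$. Convexity means all exponents $\beta_k-1\le 0$, and the asymptotic opening $\ge\pi\alpha$ gives total behavior $|w|^{\alpha_\infty-1}$ at infinity. The general convex set then follows by polygonal approximation, with constants independent of the approximation.

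Convexity should give two-sided control of $|(F^{-1})'|$ on each semicircle $|w|=\rho$. The product $\prod|w-a_k|^{\beta_k-1}$ is log-concave-ish in the radial direction. More usefully, $\log|(F^{-1})'|$ is harmonic in $\mathbb{H}$ and is the Poisson integral of a measure of fixed sign, namely the turning measure, which has total mass $\le\pi(1-\alpha)$; this is the Riesz decomposition. This should yield that $\rho\mapsto \rho^{-2}\int_{D_\rho}|(F^{-1})'|^2$ is comparable, up to an absolute factor, to $(\operatorname{diam}\Omega_r)^2/\rho^2$. Since $\Omega_r$ is comparable to a convex set, its area is comparable to $\operatorname{inradius}\cdot\operatorname{diam}$, and containment of a sector of angle $\pi\alpha$ forces $\operatorname{area}\gtrsim\alpha\,\operatorname{diam}^2$ at large scales. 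At small scales near $p$, the relevant angle is that of $\Omega$ at $p$, which is $\ge\pi\alpha$ by convexity. This is where the factor $\alpha$, and hence the term $2\log\alpha$, enters: $|\Omega_T|^2/T^4$ versus $|\Omega_\tau|^2/\tau^4$ loses at most $(c\alpha)^{-2}$.

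The main obstacle is this area comparison. We need to show that the sublevel sets of $G$ are uniformly "round" relative to $\rho$ via distortion of $F^{-1}$, with constants depending only on $\alpha$ through a factor linear in it. I would first try to prove directly from the Schwarz–Christoffel product that $|(F^{-1})'(w)|$ varies by at most an absolute factor on each semicircle. The nonpositive exponents give monotone-type behavior of each factor along arcs, and this would let us reduce to the one-dimensional radial quantity $\int_0^\rho |(F^{-1})'(s)|\,ds\approx\operatorname{diam}$, which is increasing and controlled. The interior-pole case is handled the same way after the appropriate model.
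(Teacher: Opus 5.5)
Your boundary-pole reduction matches the paper's. Conformal invariance of the energy plus the reflected planar ACF argument gives $\Phi'/\Phi\ge 4/r-2|\Omega_r|'/|\Omega_r|$, so everything hinges on $|\Omega_T|/T^2\lesssim\alpha^{-1}|\Omega_\tau|/\tau^2$. There are, however, two genuine gaps.

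\textbf{Interior pole.} This case is not ``handled the same way.'' For $p\in\Omega$ the Neumann Green's function is $G=\log|\psi-i|+\log|\psi+i|=\log|\psi^2+1|$; the Neumann condition forces the reflected pole at $-i$. So $G=\log|F|$ only for $F=\psi^2+1$, which maps $\Omega$ onto the slit plane $\mathbb{C}\setminus[1,\infty)$, not onto a half-plane. For $r\ge 1$ the sets $\Omega_r$ reach $\partial\Omega$, each level curve acquires a free (Neumann) point, and the Wirtinger bound drops to $\sqrt{\lambda_1}+\sqrt{\lambda_2}\ge 1/r$. Your claim that $\rho^{-4}\int\int$ is nondecreasing is then false. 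On $\Omega=\mathbb{H}$ with $p=i$, take $u_1=(\Re z)^+$ and $u_2=(\Re z)^-$. Symmetry gives $\int_{\Omega_r}|\nabla u_i|^2=|\Omega_r|/2\sim \pi r/4$, so $r^{-4}\int\int$ decays like $r^{-2}$; likewise $|\Omega_r|\sim r$, not $r^2$. The paper therefore uses $\Phi'/\Phi\ge 4/r-2|\Omega_r|'/|\Omega_r|$ only for $r<1$ and $\Phi'/\Phi\ge 2/r-2|\Omega_r|'/|\Omega_r|$ for $r\ge 1$. It then proves a different large-scale area comparison, $|\Omega_T|/T\lesssim \alpha^{-1}|\Omega_\tau|/\tau$, plus a mixed version across $r=1$. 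These use the inclusions $B_{\tau/3}(i)\subset W_\tau$, $W_T\subset B^+_{2\sqrt T}$ and $B^+_{\sqrt\tau/2}\subset W_\tau$, together with the boundary-pole estimate.

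\textbf{The area estimate.} In the boundary-pole case, your route rests on false claims.
\begin{enumerate}
\item A uniform bound $c\alpha r^2\le|\Omega_r|\le Cr^2$ fails: a corner of angle $\pi\gamma$ at $p$ gives $|\Omega_r|\asymp r^{2\gamma}$ as $r\to 0$.
\item $|\varphi'|$ does not vary by a bounded factor on semicircles. For the sector $\{0<\arg z<\pi\alpha\}$ with $p=a$ on an edge, $\varphi'(z)=(z+c)^{\alpha-1}$ with $c=(\alpha a)^{1/\alpha}$, which is unbounded on $|z|=c$ near $z=-c$. In general $\varphi'$ is singular at every prevertex lying on the semicircle.
\item The diameter heuristic is too lossy. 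In the same example, $\operatorname{diam}(\Omega_r)/r\approx 2c^{\alpha-1}$ for small $r$ but $\gtrsim c^{\alpha-1}/\alpha$ at $r=c$, while $|\Omega_c|\asymp c^{2\alpha}/\alpha\asymp\alpha\operatorname{diam}(\Omega_c)^2$. Bounding $|\Omega_T|$ by $\operatorname{diam}(\Omega_T)^2$ and comparing diameters across scales therefore costs $\alpha^{-2}$, giving $4\log\alpha$ instead of the sharp $2\log\alpha$.
\end{enumerate}

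What is missing is an integrated, not pointwise, estimate over $[T,2T]\times[0,\pi]$:
\begin{enumerate}
\item Write $|\varphi'|^2/|\varphi'(0)|^2=\exp\bigl(2\int\log|w/(re^{i\theta}-w)|\,d\mu(w)\bigr)$ using the Schwarz--Christoffel measure $\mu$.
\item Discard the mass with $|w|\notin[T/2,4T]$ or $w<0$.
\item Apply Jensen's inequality to the remaining mass $B\le 1-\alpha$ in the form $\exp(\int\log K_w\,d\mu)\le\int K_w^B\,d\mu/B$.
\item Use Tonelli to reduce to a single singularity, $\int_{-3}^{3/2}\int_0^{\pi/2}(u^2+\theta^2/4)^{-B}\,d\theta\,du\lesssim(1-B)^{-1}\le\alpha^{-1}$.
\end{enumerate}
The sharp factor $1/\alpha$ is exactly the near-non-integrability of $|x|^{-2B}$ in two dimensions. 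Passing from $\tau=0^+$ to general $\tau$ then requires splitting off the mass of $\mu$ in $(-\tau/2,\tau/2)$ and using the mean-value inequality for $|\varphi'_{\tau/2}|^2$. A final approximation step handles a general measure.
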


See~\eqref{Greensfunctiondefpboundary} and~\eqref{Greensfunctiondefpinterior} for the precise definition of $G$. The proof of Theorem~\ref{planaralmostmonPoleBoundaryChapter1} is broken up into two parts, namely
Proposition~\ref{planaralmostmonPoleBoundary} and Proposition~\ref{planaralmostmonPoleInterior}, corresponding to when the pole $p$ is on the boundary or in the interior of $\Omega$ respectively. A key tool in the proof is the Schwarz-Christoffel formula. We also examine the sharpness of Theorem~\ref{planaralmostmonPoleBoundaryChapter1}. We show in Proposition~\ref{sectoralmmonnearpole} that pure monotonicity $\Phi'(r) \ge 0$ cannot be obtained from the Wirtinger inequality eigenvalue estimate. In fact, in Proposition~\ref{sharpnessofACFpoleboundary}, we show that, using the Wirtinger inequality eigenvalue estimate, the $2\log \alpha$ in Theorem~\ref{planaralmostmonPoleBoundaryChapter1} cannot be replaced by $(2-\varepsilon) \log \alpha$ for any $\varepsilon > 0.$

In the case when $p \in \partial \Omega$, doubling the convex body yields a 2-dimensional Alexandrov surface without boundary, whose Green's function with pole at $p$ agrees with the original Green's function when restricted to $\Omega$, by symmetry. Therefore, a natural variant of the convex domain setting is the setting of 2-dimensional complete manifolds $M$ that have nonnegative Gaussian curvature and Euclidean volume growth. The Euclidean volume growth condition corresponds to the condition that the convex bodies contain a sector. For $u_1, u_2 \in H^1_{\text{loc}}(M)$, the functional $\Phi$ is defined by \begin{align}\label{Phimflddef}
    \Phi(r) := \frac{1}{|\Omega_r|^2} \int_{\Omega_r} |\nabla u_1|^2 \,d\operatorname{vol} \int_{\Omega_r} |\nabla u_2|^2 \,d\operatorname{vol}, 
\end{align} where $|\Omega_r|$ now denotes the Riemannian volume of the Green sublevel set $\{x \in M: G(x) < \log r\}$.

\begin{theorem}\label{mainthm2DmanifoldsChapter1} Let $M$ be a 2-dimensional, complete, connected, oriented, noncompact smooth Riemannian manifold without boundary, with nonnegative Gaussian curvature $K$, Euclidean volume growth, and finite total curvature. Let $\alpha = 1 - \frac{1}{2\pi}\int_{M} K \,d\operatorname{vol}$. Let $u_1, u_2 \in H^1_{\text{loc}}(M)$ be continuous, nonnegative, and subharmonic functions on $M$, with vanishing product. Let $p \in M$ with $u_1(p) = u_2(p) = 0$, and let $G$ be the conformal Green's function with pole at $p$. Then, the function $\Phi$~\eqref{Phimflddef} satisfies\begin{align*}
    \int_{\tau}^{T} \frac{\Phi'(r)}{\Phi(r)} \,dr &\ge -C + 2\log \alpha,
\end{align*} uniformly in $0 < \tau < T$, for an absolute constant $C \in \mathbb{R}.$
\end{theorem}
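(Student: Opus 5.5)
Pass to global conformal coordinates and reduce to a one-dimensional eigenvalue comparison, as in the planar case. By uniformization and Huber's theorem, a complete surface with $K\ge 0$ and finite total curvature is conformally $\mathbb{C}$. Choose a conformal diffeomorphism $z: M\to\mathbb{C}$ with $z(p)=0$, and write the metric as $g=e^{2w}|dz|^2$. Dirichlet energy is conformally invariant in dimension $2$, and since $G$ is the conformal Green's function, $G=\log|z|$ up to an additive constant. We normalize so that $\Omega_r=\{|z|<r\}$. Then
\[
\int_{\Omega_r}|\nabla u_i|^2\,d\operatorname{vol}=\int_{B_r}|\nabla u_i|^2\,dz
\]
is a Euclidean quantity, and $|\Omega_r|=\int_{B_r}e^{2w}$. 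Writing $A(r)=|\Omega_r|$ and $I_i(r)=\int_{B_r}|\nabla u_i|^2$, we get
\[
\frac{\Phi'}{\Phi}=\frac{I_1'}{I_1}+\frac{I_2'}{I_2}-\frac{2A'}{A}.
\]
The ACF eigenvalue argument on circles, using Wirtinger/Friedland--Hayman with $u_1u_2=0$, gives $\frac{I_1'}{I_1}+\frac{I_2'}{I_2}\ge \frac{4}{r}$. This is exactly the flat ACF monotonicity in the $z$-plane, since subharmonicity is conformally invariant. It follows that
\[
\int_\tau^T\frac{\Phi'}{\Phi}\ge 4\log\frac{T}{\tau}-2\log\frac{A(T)}{A(\tau)}.
\]
So it suffices to prove the volume comparison $\frac{A(T)}{A(\tau)}\le C\alpha^{-1}\left(\frac{T}{\tau}\right)^2$ uniformly in $\tau<T$.

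Next we describe $w$ through a Riesz decomposition. The curvature equation is $-\Delta w=Ke^{2w}=:\mu\ge 0$, a finite measure of total mass $2\pi(1-\alpha)<2\pi$. Since $M$ is parabolic with nonnegative curvature, the harmonic part is constant. Normalizing it to zero, we get
\[
w(z)=-\frac1{2\pi}\int\log|z-\zeta|\,d\mu(\zeta)+c.
\]
Hence $e^{2w}=e^{2c}\prod$-type weights $|z-\zeta|^{-2\beta}$ averaged over $\mu$, with total exponent $\beta_{\rm tot}=1-\alpha<1$. The model case is a single point mass at $\zeta_0$, which gives a cone of angle $2\pi\alpha$ with vertex away from $p$. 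In that case $A(r)=e^{2c}\int_{B_r}|z-\zeta_0|^{-2(1-\alpha)}$. For $r\ll|\zeta_0|$ this behaves like $r^2|\zeta_0|^{-2(1-\alpha)}$, and for $r\gg|\zeta_0|$ like $\frac{1}{\alpha}r^{2\alpha}$. The ratio $A(T)/(A(\tau)(T/\tau)^2)$ is therefore bounded by $C/\alpha$, with the $1/\alpha$ coming from the integrability constant $\int_{B_1}|z|^{-2(1-\alpha)}=\pi/\alpha$. This explains the $2\log\alpha$ loss.

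The general volume comparison is the main obstacle. Let $m(r)=\frac1{2\pi}\mu(B_r)$, so $m\le 1-\alpha$. I would first show that $\log\frac{A(r)}{r^2}$ is almost nonincreasing, mimicking Bishop--Gromov in conformal coordinates. For $|z|=r$, the average $\fint_{|z|=r}w$ is nonincreasing in $r$ because $w$ is superharmonic, with derivative $-m(r)/r\le 0$. I would then convert circle averages of $e^{2w}$ into a bound via Jensen in the reverse direction. The plan is to use the explicit kernel: by Hölder or convexity in the measure $\mu/\mu(\mathbb{C})$,
\[
e^{2w(z)}\le e^{2c}\int |z-\zeta|^{-2(1-\alpha)}\,\frac{d\mu(\zeta)}{\mu(\mathbb{C})}.
\]
Averaging over the disk then reduces the upper bound for $A(T)$ to the single point-mass computation, uniformly in $\zeta$, which gives $A(T)\le C\alpha^{-1}T^2\sup_\zeta(\cdot)$. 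For the lower bound on $A(\tau)$, I would use $e^{2w}\ge \exp(\text{average})$, i.e. Jensen in the forward direction, on $B_\tau$. The difficulty is matching the two sides, $\mathrm{dist}$ weights at scale $T$ versus $\tau$. I would split $\mu$ into mass inside $B_{2T}$ and outside it. The outside part is comparable on $B_T$ to a constant times $\min(T,|\zeta|)^{-2(1-\alpha)}$ type factors, and the inside part is handled via the mean-value monotonicity above. If a direct comparison fails, a fallback is a Bol/Alexandrov isoperimetric inequality on $M$, $L^2\ge 4\pi\alpha A$, combined with the coarea formula along level sets of $|z|$.

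Finally, I would handle $\alpha\to 0$ uniformly, which is where the constant $C$ must stay absolute, and then conclude the stated estimate.
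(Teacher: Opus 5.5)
Your reduction is sound and matches the paper: in the coordinate $z$ the eigenvalue problem on $\partial\Omega_r$ becomes the unweighted Wirtinger problem on $\{|z|=r\}$ (flat ACF), so everything hinges on $A(T)/T^2\lesssim\alpha^{-1}A(\tau)/\tau^2$. That comparison is the actual content of the theorem, and the proposal does not prove it. The tools you name also do not close it as you deploy them.

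Your displayed Jensen step uses the whole measure and the unnormalized kernel. It therefore bounds $A(T)$ by an arithmetic mean, $e^{2c}\int\!\int_{B_T}|z-\zeta|^{-2\beta}\,dz\,d\nu(\zeta)$ with $\nu=\mu/\mu(\mathbb C)$ and $\beta=1-\alpha$. By contrast, $\lim_{\tau\to0}A(\tau)/\tau^2=\pi e^{2w(0)}=\pi e^{2c}\exp\bigl(-2\beta\int\log|\zeta|\,d\nu\bigr)$ is a geometric mean, and the gap between them is unbounded. Take (a smoothing of) $\nu=\frac12(\delta_{\zeta_1}+\delta_{\zeta_2})$ with $|\zeta_1|=\epsilon$, $|\zeta_2|=\epsilon^{-1}$ and $T\le\epsilon^2$. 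Then $A(T)\approx\pi T^2e^{2c}$, while your bound is $\approx\frac{\pi}{2}T^2e^{2c}\epsilon^{-2\beta}$.

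Handing the mass inside $B_{2T}$ to ``mean-value monotonicity'' does not repair this. Monotonicity of the circle averages $\bar w(r)$ of the superharmonic $w$, combined with forward Jensen, only produces lower bounds for $A$. The inside mass is precisely where an upper, reverse-Jensen bound is needed. The Fiala--Huber fallback gives only $A(T)/T^2\lesssim\alpha^{-3}$, i.e.\ $6\log\alpha$ (the paper's weaker Theorem~\ref{almostmonotFiala}), not the stated $2\log\alpha$.

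The missing idea is to normalize the kernel at the current scale and apply Jensen only to the mass living at that scale. Then the exponent is governed by the local mass $B\le1-\alpha$, and the singular integral costs $(1-B)^{-1}\le\alpha^{-1}$. I use the paper's normalization $w=-\int(\log|z-\zeta|-\log^+|\zeta|)\,d\mu$ with $\mu(\mathbb C)=1-\alpha$.

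The paper first reduces to $\mu(B_1)=0$ and $\tau=0^+$, via the splitting and approximation of Proposition~\ref{planaralmostmonPoleBoundary}. It then writes $e^{2w(z)-2w(0)}=\exp\bigl(2\int\log|\zeta/(z-\zeta)|\,d\mu\bigr)$. For $|z|\in[T,2T]$, the mass with $|\zeta|\notin[T/2,4T]$ contributes only a bounded factor. It applies Jensen with $B=\mu_J(\mathbb C)$, bounds $\iint(u^2+\theta^2/4)^{-B}\lesssim(1-B)^{-1}$, and sums dyadically.

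Your own ingredients would also close the argument if assembled this way:
\begin{enumerate}
\item By Jensen's formula, $w(z)-\bar w(r)=\int\log\frac{\max(r,|\zeta|)}{|z-\zeta|}\,d\mu(\zeta)$.
\item For $|z|<r$, the mass outside $B_{2r}$ contributes a factor at most $4$ to $e^{2(w-\bar w(r))}$.
\item Jensen with exponent $2\mu(B_{2r})$ on the remaining mass gives $A(r)\le16\pi\alpha^{-1}r^2e^{2\bar w(r)}$.
\item Forward Jensen and monotonicity give $A(r)\ge\pi r^2e^{2\bar w(r)}$.
\end{enumerate}
Since $\bar w$ is nonincreasing, this yields the comparison for all $\tau<T$ directly and bypasses the paper's reductions.

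Two smaller points. First, Huber's theorem alone allows the flat cylinder, which must be excluded using Euclidean volume growth. Second, constancy of the harmonic part cannot come from parabolicity, which is conformally invariant: $e^{2\operatorname{Re}z}|dz|^2$ is flat on $\mathbb C$ with nonconstant harmonic $w$. It requires completeness, which the paper uses through the Belegradek--Hu criterion and a Liouville argument for the $\log^+$-normalized potential. That normalization is also needed for the integral to converge.
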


In~\eqref{alphadef}, we show that $\alpha > 0$. The conformal Green's function is defined via the conformal mapping between $M$ and $\mathbb{C}$, which we prove exists in Proposition~\ref{MdiffeoR2}. The role of the Schwarz-Christoffel formula in the proof is replaced by an analogous Riesz decomposition formula for subharmonic functions.  As with Theorem~\ref{planaralmostmonPoleBoundaryChapter1}, the constant $2$ in $2\log \alpha$ cannot be improved using the Wirtinger inequality eigenvalue estimate, as can be seen by doubling the sector example in Proposition~\ref{sharpnessofACFpoleboundary}. In Theorem~\ref{almostmonotFiala}, we offer another slightly weaker form of almost-monotonicity using the Fiala-Huber isoperimetric inequality \cite{Fiala}, \cite{HuberIso}. 

We then turn to applications of almost-monotonicity of $\Phi$ to the regularity theory of two-phase free boundary problems. 
\begin{theorem}\label{mfldregularitythmChapter1}
Let $M, \alpha$ be as in Theorem~\ref{mainthm2DmanifoldsChapter1}. Given $x_0 \in M$ and $u_0 \in H^1(B_2(x_0))$, let $u$ be a local minimizer of \begin{align*}
    J[v] &:= \int_{B_2(x_0)} \left(|\nabla v|^2 + 1_{\{v>0\}}\right)\,d\operatorname{vol}
\end{align*} over the set \begin{align*}
    v \in H^1(B_2(x_0)), \qquad \text{Tr }v = \text{Tr }u_0 \text{ on }\partial B_{2}(x_0).
\end{align*}Then there exists $C(\alpha) > 0$ so that \begin{align*}
    \sup_{B_{1/2}(x_0)}|\nabla u| &\le C(\alpha) \left(\int_{B_1(x_0)} |\nabla u|^2 \,d\operatorname{vol}\right)^{\frac{1}{2}}.
\end{align*}
\end{theorem}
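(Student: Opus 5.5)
We follow the Alt--Caffarelli--Friedman strategy, transplanted to $M$ through conformal coordinates. By Proposition~\ref{MdiffeoR2} there is a global conformal diffeomorphism $F: M \to \mathbb{C}$, so $g = e^{2\psi}|dz|^2$. The Dirichlet energy is conformally invariant in dimension $2$, and $\Delta_g = e^{-2\psi}\Delta_{\mathbb{C}}$. Hence harmonicity, subharmonicity of $u^\pm$ away from the free boundary, and the minimizing property of $J$ all transfer to $\mathbb{C}$. The only change is that the volume term becomes $\int 1_{\{v>0\}} e^{2\psi}\,dx$, a one-phase weight.

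The local minimizer then solves a two-phase problem on $\mathbb{C}$ with variable coefficient $e^{2\psi}$. Because $K \ge 0$ and the total curvature is finite, the Riesz decomposition gives $\psi(z) = -\frac{1}{2\pi}\int K\, d\operatorname{vol}(w)\cdot\log|z-w| + \text{const}$, up to harmonic terms that vanish by completeness. So $e^{2\psi}$ is controlled by a product of powers $|z-w|^{-\beta}$ with total exponent at most $2(1-\alpha)<2$.

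The first step is a standard set of consequences of minimality, with constants depending only on $\alpha$. The function $u^\pm$ is subharmonic, $u$ is harmonic in $\{u\neq 0\}$, and we have a nondegeneracy/interior estimate: if $u(x)>0$ and $d(x) = \operatorname{dist}(x,\partial\{u>0\})$, then $|\nabla u(x)| \lesssim \frac{u(x)}{d(x)}$ by the gradient estimate for harmonic functions on geodesic balls.

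We also need a bound of the form $\frac{u(x)}{d(x)} \le C$ near the one-phase part of the free boundary. We would get it by the usual comparison-with-harmonic-replacement argument. The cost of $1_{\{v>0\}}$ on a ball $B_\rho$ is $|B_\rho| \le \pi\rho^2$, which holds by Bishop--Gromov since $K\ge 0$. This lower-order term is the only place the metric enters, and volume comparison handles it.

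The heart of the proof is the two-phase point. At $p \in \partial\{u>0\}\cap\partial\{u<0\}$ we apply Theorem~\ref{mainthm2DmanifoldsChapter1} to $u_1 = u^+$ and $u_2 = u^-$ with pole $p$. A harmonic-replacement cutoff lets us assume $u$ is defined on all of $M$, or else we localize the statement by the usual truncation. Almost-monotonicity gives
\begin{align*}
\Phi(r) \le C\alpha^{-2}\,\Phi(R) \quad\text{for } r<R.
\end{align*}
We need Green sublevel sets to be comparable to geodesic balls: $B_{c(\alpha) r}(p)\subset \Omega_r\subset B_{C(\alpha) r}(p)$ and $|\Omega_r|\approx r^2$. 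This should follow from the asymptotics $G\sim\log d(\cdot,p)$ at both ends together with the Euclidean volume growth $|B_r|\ge \alpha\pi r^2$.

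We then combine the monotonicity bound with the mean-value property. Harmonicity of $u^\pm$ in their positivity sets and the quasiconformal estimates mentioned in the abstract give $\frac{1}{r^2}\int_{B_r}|\nabla u^\pm|^2 \lesssim \left(\frac{\sup_{B_{2r}} u^\pm}{r}\right)^2$. This yields a product bound
\begin{align*}
\frac{\sup_{B_r}u^+}{r}\cdot\frac{\sup_{B_r}u^-}{r} \le C(\alpha)\int_{B_1}|\nabla u|^2 .
\end{align*}
Feeding this into the Alt--Caffarelli--Friedman dichotomy gives a linear growth $\sup_{B_r(p)}|u| \le C(\alpha)\,r\,\|\nabla u\|_{L^2(B_1)}$. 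The dichotomy runs as follows. Either both phases are large, and the product bound controls each phase; or one phase is small, and then the one-phase estimate of the previous step applies. Combined with the interior gradient bound, this gives the sup estimate on $B_{1/2}$.

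The main obstacle is making all comparisons depend only on $\alpha$, with no pointwise curvature bounds. This covers $\Omega_r$ versus $B_r$ and the gradient estimates for harmonic functions on small geodesic balls. Gradient estimates normally need $|K|$ bounds. Instead we would work in conformal coordinates, where harmonic functions are Euclidean harmonic. There $F$ restricted to $B_r(p)$, suitably normalized, is $K(\alpha)$-quasiconformal, or at least Hölder/bi-Hölder with exponents depending on $\alpha$, thanks to the Riesz representation of $\psi$ with total mass $<2\pi$.

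Lipschitz estimates in the metric then reduce to Euclidean ones multiplied by $e^{-\psi}$. The delicate part is controlling $e^{-\psi}$ near curvature concentrations, where $\psi\to+\infty$. This sign helps, since $e^{-\psi}\to 0$ there. Away from those points we need a Harnack-type bound for $\psi$ depending only on the total mass $2\pi(1-\alpha)$.
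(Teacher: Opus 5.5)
Your outline has the same ACF skeleton as the paper: almost-monotonicity with pole at the nearest free boundary point $p$, conversion of $\Phi$ into ball energies, an ACF growth lemma, and interior gradient estimates. The step that makes this work on $M$, however, is misstated and not proved.

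\textbf{The sandwich step.} The claim $B_{c(\alpha)r}(p)\subset\Omega_r\subset B_{C(\alpha)r}(p)$ with $|\Omega_r|\approx r^2$ is false. Take a smoothed cone of angle $2\pi\alpha$ with $p$ at the tip. There $\Omega_r$ is comparable to a geodesic ball of radius $\asymp r$ for small $r$, but of radius $\asymp r^{\alpha}$ for large $r$. After rescaling, such conical behavior can occupy any range of scales between $d$ and $1$. The asymptotics of $G$ at $p$ and at infinity give no control at these intermediate scales. Moreover, the error in $G=\log d(\cdot,p)+O(1)$ near $p$ depends on the local geometry, not only on $\alpha$. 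What the argument actually needs is the nonlinear sandwich
\[
B_{Ls}(p)\subset\Omega_{\Lambda R(x)}\subset B_{\tilde L s}(p),\qquad s=d(x,p),
\]
with $\Lambda,\tilde L$ depending only on $L$ and $\alpha$, together with Ahlfors regularity $\alpha\pi t^2\le\operatorname{vol}(B_t)\le\pi t^2$. This is a uniform quasisymmetry statement for $F^{-1}:\mathbb{C}\to M$, and it is the genuinely new ingredient of the paper's proof. The paper gets it from Bishop--Gromov and Shiohama (Ahlfors regularity), a compactness-and-splitting argument (the LLC property), and then Wildrick and Tukia--V\"ais\"al\"a.

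Your remark that $F$ is ``$K(\alpha)$-quasiconformal, or at least bi-H\"older'' does not supply this. $F$ is conformal, so quasiconformality is automatic and by itself gives no uniform distance control; upgrading it to quasisymmetry requires exactly the Ahlfors-regularity and LLC information the paper establishes. Bi-H\"older control only gives sandwiches whose radius ratio blows up as $d\to 0$, which destroys the linear growth. One legitimate way to use the Riesz representation would be to show that $e^{2\psi}$ is an $A_1$ weight with constant depending only on $\alpha$, hence strong-$A_\infty$, which yields quasisymmetry. Nothing of this kind is in the proposal.

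\textbf{Further repairs.} The inequality $\frac{1}{r^2}\int_{B_r}|\nabla u^\pm|^2\lesssim(\sup_{B_{2r}}u^\pm/r)^2$ is a Caccioppoli bound in the wrong direction. Combined with the upper bound on the energy product from monotonicity, it says nothing about $\sup u^+\cdot\sup u^-$. You need the sup controlled by the energy, via the mean-value inequality plus a Poincar\'e inequality using the density of the zero phase. In other words, you need an ACF growth lemma of the form
\[
|u(z)|\le C_{*}d+C_{*}\bigl[\int_{B_{L_{*}d}(p)}|\nabla u^+|^2\int_{B_{L_{*}d}(p)}|\nabla u^-|^2\bigr]^{1/4},
\]
with constants depending only on $\alpha$.

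Your one-phase bound $u/d\le C$ is not proportional to $E^{1/2}$. To reach the homogeneous estimate you must also show $E\ge c(\alpha)$ whenever a free boundary point is nearby (nondegeneracy), as the paper does.

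Finally, your concern that harmonic gradient estimates need pointwise curvature bounds is unfounded. $K\ge 0$ gives $\operatorname{Ric}\ge 0$, so Cheng--Yau, Li--Schoen and the Bochner formula hold with dimensional constants. The proposed detour through Euclidean estimates and a ``Harnack-type bound for $\psi$'' would fail anyway: $\psi$ is merely superharmonic, and its pointwise oscillation is not controlled by the total curvature.
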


The proof of this regularity theorem follows the general structure of Alt, Caffarelli, and Friedman \cite{ACF84}, although a new ingredient to accomodate our functional is the fact that sublevel sets of Green's function can be placed between two balls of comparable radii. Quasiconformal estimates, see Theorem~\ref{mfldquasisymmetry}, are used to obtain this. Our proof technique with the functional $\Phi$ can also be used to recover the Lipschitz bound of Gemmer, Moon, and Raynor \cite{GMR} for planar convex domains. 

We remark that ACF-type monotonicity formulas have previously been studied in the Riemannian setting. Teixeira and Zhang \cite{TeixeiraZhang} have extended the monotonicity formulas of Alt, Caffarelli, and Friedman \cite{ACF84} and Caffarelli, Jerison, and Kenig \cite{CJK} to Riemannian manifolds and proven the Lipschitz bound with implied constants depending on local geometry conditions, such as pointwise bounds on the Riemannian curvature tensor and its derivatives. On the other hand, our results give bounds solely in terms of the total integral curvature, namely the parameter $\alpha$.  

\section{Review of free boundary regularity theory}

We now review the fundamental contributions of Alt and Caffarelli \cite{AC81} and Alt, Caffarelli, and Friedman \cite{ACF84} to the regularity theory of one and two-phase free boundary problems respectively. The variational problem under consideration is the following. Let $\Omega \subset \mathbb{R}^n$ be an open, bounded, convex domain, $n\ge 2$, and $K\subset \overline{\Omega}$ be a closed set. For a given $u_0 \in H^1(\overline{\Omega})$, the function $u$ is defined to be the minimizer of the functional \begin{equation}\label{ACFJ}
    J[v] = \int_{\Omega}\left( |\nabla v|^2 + Q(x) 1_{\{v > 0\}}\right) \,dx
\end{equation} over the set $v \in H^1(\Omega)$ with $v = u_0$ on $K$, where $Q \in C^{\infty}(\overline{\Omega})$ is a smooth, positive function on $\overline{\Omega}$ and $1_{\{v > 0\}}$ is the indicator function of the set where $v$ is positive. Existence of minimizers follows from the direct method of the calculus of variations \cite{ACF84}. The Euler-Lagrange equation $J'[u] = 0$ gives rise to the following equations. If $\Omega_{+} = \{x\in \Omega: u > 0\}$ and $\Omega_{-} = \{x \in \Omega: u(x) \le 0\}^{\circ}$ denote the positive and negative phases respectively, then $u$ is harmonic in each phase, that is, $\Delta u = 0$ in $\Omega_{+}$ and in $\Omega_{-}$, interpreted in the weak sense. The one-phase case corresponds to when $u \ge 0$ on $\Omega$ and $u \equiv 0$ on $\Omega_{-}$. Otherwise, we say that the problem is two-phase. An additional equation satisfied by $u$ is the gradient jump condition \begin{equation}\label{jump}
    |\nabla u^{+}(x)|^2 - |\nabla u^{-}(x)|^2 = Q(x),
\end{equation} at all regular, interior points $x$ of the interface $F(u) := \partial \Omega^{+} \cap \partial \Omega^{-}$, known as the free boundary. In the one and two-phase cases respectively, Alt and Caffarelli \cite{AC81} and Alt, Caffarelli, and Friedman \cite{ACF84} proved that a minimizer $u$ is Lipschitz continuous in any compact subset of $\Omega\setminus K.$ The proof in the two-phase case uses in a key way the monotonicity of the ACF functional $\Phi_{\mathbb{R}^n}$ defined in~\eqref{Phi}. The
Lipschitz bound is crucial for the subsequent regularity theory of the free boundary, and, in particular, it is used to prove that the jump condition~\eqref{jump} is satisfied.

When Neumann boundary conditions are imposed, Gemmer, Moon, and Raynor \cite{GMR} proved Lipschitz regularity up to a Neumann boundary in dimension 2 using the ACF functional and its associated monotonicity properties. On the other hand, Gurevich \cite{Gur} proved that the Lipschitz bound up to a Dirichlet boundary frequently fails. 

An extension of the theorem of Gemmer, Moon, and Raynor was provided to higher dimensions by Beck, Jerison, and Raynor \cite{BJR}. They proved an almost-monotonicity formula for a variant of $\Phi_{\mathbb{R}^n}$ where they integrated over balls intersected with $\Omega$. The proof used a version of Friedland-Hayman's inequality \cite{FH}, the higher dimensional substitute for the Wirtinger inequality eigenvalue estimate, for convex cones. (See Beck and Jerison \cite{BJFH} for an alternative proof of this version of Friedland-Hayman, along with a characterization of the equality cases.) Using this almost-monotonicity formula, they proved the Lipschitz estimate up to a Neumann boundary for convex domains satisfying an appropriate Dini condition. In particular, the class of domains they considered included all $C^{1, \alpha}$-domains for any $\alpha > 0.$ However, they showed by example that there exist convex sets in dimension $n\ge 3,$ where, not only the Dini assumption fails, but one cannot conclude the desired Lipschitz bound by using their functional and the Friedland-Hayman estimate. This situation motivates our introduction of the functional $\Phi$. In the higher dimensional $n\ge 3$ manifold setting, if $M$ admits a positive Green's function $G$ and $\Omega_{r} = \{x \in M: G(x) > r^{2-n}\}$, then the natural functional $\Phi$ to study is given by \begin{align*}
    \Phi(r) = \frac{1}{f(r)^2} \int_{\Omega_r} |\nabla u_1(x)|^2 G(x) \,d\operatorname{vol} \int_{\Omega_r} |\nabla u_2(x)|^2 G(x) \,d\operatorname{vol},
\end{align*} where $f(r) := \int_{\Omega_r} G(x) \,d\operatorname{vol}$. In this work, we address only the case of dimension 2.

\section{Almost-monotonicity of $\Phi$ for planar convex domains}

\subsection{Green's function and the Schwarz-Christoffel formula}

Let $\Omega$ be an unbounded, open convex set in $\mathbb{R}^2$ containing a sector of opening angle $\pi \alpha > 0$. We may construct a Neumann Green's function on $\Omega$ by pulling back the explicit Neumann Green's functions on the upper half-plane under a conformal map. By the Riemann-mapping theorem, given $p \in \overline{\Omega}$, there exists a conformal map $\psi: \Omega \to \mathbb{H}$, extending continuously to a homeomorphism up to the boundary by Carath\'eodory's theorem. When $p \in \partial \Omega$, we normalize so that $\psi(p) = 0$, and when $p \in \Omega$, we normalize by $\psi(p) = i$. Let $\varphi = \psi^{-1}: \mathbb{H} \to \Omega$ denote the inverse map. It is routine to check that the functions \begin{align}\label{Greensfunctiondefpboundary}
G(x) &= \log |\psi(x)|, p \in \partial \Omega
\end{align} and
\begin{align}\label{Greensfunctiondefpinterior}
G(x) &= \log(|\psi(x)+i||\psi(x)-i|), p \in \Omega
\end{align} are Neumann Green's functions for $\Omega$. 

In the case when $\Omega$ is a polygon, the Schwarz-Christoffel formula states that \begin{align*}
    \varphi'(z) &= \prod_{j=1}^{N}(z-a_j)^{-\beta_i},
\end{align*} up to scaling by an absolute constant, where $a_j \in \mathbb{R}$ are the $\psi-$images of the vertices of $\Omega$, and $\pi \beta_i$ are the exterior angles at the vertex. Since $\Omega$ contains a sector of opening angle $\pi \alpha$, $\sum_{j=1}^{N} \beta_j \le 1 - \alpha.$ 

The Schwarz-Christoffel formula was generalized to our context of convex domains by Study \cite{Study1913} and Paatero \cite{Paatero1931}. In particular, these results imply that there exists a (positive) Borel measure $\mu$ on $\mathbb{R}$ with total variation $\mu(\mathbb{R}) \le 1 - \alpha$ such that\begin{align}\label{SchwarzChristoffelMeasure}
    \frac{\varphi'(z)}{\varphi'(i)} &= \exp\left(\int_{\mathbb{R}}\log\left(\frac{i - w}{z-w}\right)\,d\mu(w) \right).
\end{align}

\subsection{Almost-monotonicity for a pole on the boundary}

\begin{proposition}\label{planaralmostmonPoleBoundary} Let $\Omega$ be an unbounded, open convex set in $\mathbb{R}^2$ containing a sector of opening angle $\pi \alpha > 0$. Let $u_1, u_2 \in H^1_{\text{loc}}({\Omega})$ be continuous, nonnegative, and subharmonic functions in $\Omega$, with vanishing product. Let $p \in \partial \Omega$ with $u_1(p) = u_2(p) = 0$, and assume the weak Neumann boundary conditions $\partial_{\nu} u_i = 0$ on $\partial \Omega$. Let $G = \log |\psi(x)|$ be as in~\eqref{Greensfunctiondefpboundary}. Then, the function $\Phi$ \eqref{Phi2D} satisfies\begin{align*}
    \int_{\tau}^{T} \frac{\Phi'(r)}{\Phi(r)} \,dr &\ge -C + 2\log \alpha,
\end{align*} uniformly in $0 < \tau < T$, for an absolute constant $C \in \mathbb{R}.$
\begin{proof}
Here, and elsewhere, we give a formal proof, in the sense that the equality and inequalities with $\frac{\Phi'(r)}{\Phi'(r)}$ below in reality hold only almost everywhere. However, since $\log \Phi$ is absolutely continuous, the conclusion then follows by integration. Recall that \begin{align*}
    \Phi(r) &= \frac{1}{|\Omega_r|^2}\int_{\Omega_r}|\nabla u_1(x)|^2 \,dx \int_{\Omega_r}|\nabla u_2(x)|^2 \,dx.
\end{align*}By the change of variables formula, \begin{equation*}
    \int_{\Omega_r}|\nabla u_i|^2 \,dx = \int_{\psi(\Omega_r)}|\nabla u_i(\varphi(z))|^2|\varphi'(z)|^2 \,dz = \int_{B_r^{+}}|\nabla v_i(z)|^2 \,dz,
\end{equation*} where $v_i := u_i \circ \varphi$ and $B_r^{+}$ denotes $B_r \cap \mathbb{H}$, with $B_r := B_r(0)$. Then, \begin{align*}
    \frac{\Phi'(r)}{\Phi(r)} &= \frac{\frac{d}{dr}\int_{B_r^{+}}|\nabla v_1(z)|^2 \,dz}{\int_{B_r^{+}}|\nabla v_1(z)|^2 \,dz} + \frac{\frac{d}{dr}\int_{B_r^{+}}|\nabla v_2(z)|^2 \,dz}{\int_{B_r^{+}}|\nabla v_2(z)|^2 \,dz} - \frac{2|\Omega_r|'}{|\Omega_r|} \\ &= \frac{\int_{rS^1 \cap \mathbb{H}}|\nabla v_1(z)|^2 \,dz}{\int_{B_r^+}|\nabla v_1(z)|^2 \,dz} + \frac{\int_{rS^1 \cap \mathbb{H}}|\nabla v_1(z)|^2 \,dz}{\int_{B_r^+}|\nabla v_1(z)|^2 \,dz} - \frac{2|\Omega_r|'}{|\Omega_r|},
\end{align*} where $|\Omega_r|' = \frac{d}{dr}|\Omega_r|.$ Note that $v_i$ are also subharmonic on their supports and satisfy weak Neumann boundary conditions on $\partial \Omega$. Therefore, by Green's formula and a standard approximation argument, we obtain\begin{align*}
    \int_{B_r^{+}} |\nabla v_i(z)|^2 \,dz &\le \int_{rS^1 \cap \mathbb{H}} v_i \partial_{\nu} v_i \,d\sigma(z).
\end{align*}Define $\lambda_i(r)$ to be the largest constant such that \begin{align*}
    \int_{rS^1 \cap \mathbb{H}}|\nabla_T v(z)|^2 \,d\sigma(z) \ge \lambda_i(r)\int_{rS^1 \cap \mathbb{H}} v^2(z) \,d\sigma(z),
\end{align*} for all $v$ with the same support as $v_i$ on $rS^1 \cap \mathbb{H}$, with the interpretation that $v$ is free at the endpoints $\pm r$ if $v_i$ does not extend continuously there. If $\nabla_T, \nabla_N$ denote the tangential and normal gradients, \begin{align*}
    \int_{rS^1 \cap \mathbb{H}} |\nabla v_i|^2 \,d\sigma(z) &= \int_{rS^1 \cap \mathbb{H}} \left(|\nabla_T v_i(z)|^2 + |\nabla_N v_i(z)|^2\right) \,d\sigma(z) \\ &\ge \lambda_i(r) \int_{rS^1 \cap \mathbb{H}} v_i^2(z) \,d\sigma(z) + \int_{rS^1 \cap \mathbb{H}} |\nabla_N v_i|^2 \,d\sigma(z).
\end{align*}

For the next steps, through~\eqref{ACFreductionPhi}, we follow the argument in the proof of the ACF monotonicity formula \cite{ACF84}. Note that $(\partial_{\nu} v_i)^2 = |\nabla_N v_i|^2$. For any $t = t_i(r) > 0,$ we have \begin{align*}\begin{split}
    \int_{rS^1 \cap \mathbb{H}}v_i \partial_{\nu} v_i \,d\sigma(z) &= \frac{t}{2}\int_{rS^1 \cap \mathbb{H}}v_i^2 \,d\sigma(z) + \frac{1}{2t} \int_{rS^1 \cap \mathbb{H}}|\nabla_N v_i|^2 \,d\sigma(z) - \frac{1}{2}\int_{rS^1 \cap \mathbb{H}}\left(\sqrt{t}v_i - \frac{1}{\sqrt{t}}\partial_{\nu} v_i \right)^2 \,d\sigma(z) \\ &\le \frac{t}{2}\int_{rS^1 \cap \mathbb{H}}v_i^2 \,d\sigma(z) + \frac{1}{2t} \int_{rS^1 \cap \mathbb{H}}|\nabla_N v_i|^2 \,d\sigma(z).
\end{split}
\end{align*} Therefore, \begin{align*}
    \frac{\Phi'(r)}{\Phi(r)} &\ge \sum_{i=1}^{2} \left(\frac{\lambda_i(r) \int_{rS^1 \cap \mathbb{H}} v_i^2(z) \,d\sigma(z) + \int_{rS^1 \cap \mathbb{H}} |\nabla_N v_i|^2 \,d\sigma(z)}{\frac{t_i(r)}{2}\int_{rS^1 \cap \mathbb{H}}v_i^2 \,d\sigma(z) + \frac{1}{2t_i(r)} \int_{rS^1 \cap \mathbb{H}}|\nabla_N v_i|^2 \,d\sigma(z)} \right) - \frac{2|\Omega_r|'}{|\Omega_r|},
\end{align*} and choosing $t_i(r) = \sqrt{\lambda_i(r)}$, we obtain \begin{align}\label{FHreductionpolebdr}
    \frac{\Phi'(r)}{\Phi(r)} &\ge 2(\sqrt{\lambda_1(r)} + \sqrt{\lambda_2(r)}) - \frac{2|\Omega_r|'}{|\Omega_r|}.
\end{align} By an elementary Wirtinger inequality eigenvalue estimate, \begin{align*}
    \sqrt{\lambda_1(r)} + \sqrt{\lambda_2(r)} &\ge \frac{2}{r}.
\end{align*} Hence, for $r > 0$, \begin{align}\label{ACFreductionPhi}
    \frac{\Phi'(r)}{\Phi(r)} &\ge \frac{4}{r} - \frac{2|\Omega_r|'}{|\Omega_r|}.
\end{align} Moreover, by the change of variables formula and polar coordinates, \begin{equation}\label{OmegarChangeofVarFormula}\begin{split}
    |\Omega_r| = \int_{B_r^+} |\varphi'(z)|^2 \,dz = \int_{0}^{r} t h(t) \,dt, \qquad h(t) &:= \int_{0}^{\pi} |\varphi'(te^{i\theta})|^2 d\theta.
\end{split} 
\end{equation}

\textbf{Claim.} We have\begin{align}\label{convexplanaralpharecipbound}
    \frac{|\Omega_T|}{T^2} &\lesssim \frac{1}{\alpha} \frac{|\Omega_{\tau}|}{\tau^2}.
\end{align}Here, $f \lesssim g$ means $f \le Cg$ for an absolute positive constant $C$.

To see that this is sufficient to prove the theorem, recall that we must prove the inequality \begin{align}\label{desiredconclusionalmostmonbdr}
    \int_{\tau}^{T} \frac{\Phi'(r)}{\Phi(r)} \,dr &\ge -C + 2\log \alpha
\end{align} for any $0 < \tau < T < \infty$. By~\eqref{ACFreductionPhi}, \begin{equation*}
    -\int_{\tau}^{T} \frac{\Phi'(r)}{2\Phi(r)} \,dr \le \int_{\tau}^{T} \frac{|\Omega_r|'}{|\Omega_r|} - \frac{2}{r} \,dr = \log\left(\frac{|\Omega_T| \tau^2}{T^2|\Omega_{\tau}|} \right).
\end{equation*} Therefore,~\eqref{desiredconclusionalmostmonbdr} follows from~\eqref{convexplanaralpharecipbound}. It remains to prove the claim. We first treat measures whose support is both compact and
separated from the origin, and only afterward pass to a general measure by approximation.

\textit{Step 1: compact support separated from origin. }Fix $\varepsilon \in (0, 1)$ and assume that \begin{equation}\label{supportassump}
    \text{supp}(\mu) \subset A_{\varepsilon} = \{w \in \mathbb{R}: \varepsilon \le |w| \le \varepsilon^{-1}\}.
\end{equation} Since $\mu$ has support separated from the origin, the Schwarz-Christoffel formula~\eqref{SchwarzChristoffelMeasure} implies that $\varphi'$ extends holomorphically across a neighborhood of the origin and \begin{equation}\label{varphiprime0nonzero}
    \frac{\varphi'(0)}{\varphi'(i)} = \exp\left(\int_{\mathbb{R}}\log\left(\frac{i-w}{-w} \right) d\mu(w) \right) \neq 0.
\end{equation} In particular, \begin{equation*}
    \lim_{\tau \to 0^+} \frac{|\Omega_{\tau}|}{\tau^2} = \frac{\pi}{2} |\varphi'(0)|^2 \neq 0.
\end{equation*} We first prove the claim~\eqref{convexplanaralpharecipbound} with $\tau = 0^+$: \begin{align}\label{OmegaTbound}
    |\Omega_T| &\lesssim \frac{T^2}{\alpha}|\varphi'(0)|^2.
\end{align} In view of~\eqref{OmegarChangeofVarFormula}, it suffices to prove \begin{align*}
    \int_{0}^{T} h(r) dr &\lesssim \frac{T}{\alpha}|\varphi'(0)|^2.
\end{align*} Summing over dyadic intervals, it suffices to prove the estimate \begin{equation}\label{hintestimatetau0}
    \int_{I} h(r) dr \lesssim \frac{T}{\alpha} |\varphi'(0)|^2, \qquad I = [T, 2T].
\end{equation} By~\eqref{SchwarzChristoffelMeasure} and~\eqref{varphiprime0nonzero}, \begin{equation*}
    \frac{h(r)}{|\varphi'(0)|^2} = \int_{0}^{\pi} \exp\left(2\int_{\mathbb{R}}\log\left|\frac{w}{re^{i\theta} - w}\right| d\mu(w) \right) d\theta.
\end{equation*} For $|w| \notin [T/2, 4T]$, \begin{align*}
    \left|\frac{w}{re^{i\theta}-w}\right| \lesssim 1.
\end{align*} Let $J = [T/2, 4T]$, $\mu_J$ denote the restriction of $\mu$ to $J$, and $\mu_{J^{*}}$ denote the restriction to $J \cup (-J)$. Since $\mu$ is a finite measure, \begin{align*}
     \frac{h(r)}{|\varphi'(0)|^2} &\lesssim \int_{0}^{\pi} \exp\left(2\int_{\mathbb{R}} \log \left|\frac{w}{re^{i\theta}-w}\right| \,d\mu_{J^*}(w) \right) \,d\theta,
\end{align*} and \begin{align*}
    \frac{1}{|\varphi'(0)|^2}\int_{I} h(r) \,dr &\lesssim \int_{I} \int_{0}^{\pi}  \exp\left(2\int_{\mathbb{R}} \log \left|\frac{w}{re^{i\theta}-w}\right| \,d\mu_{J^*}(w) \right) \,d\theta \,dr.
\end{align*} Note that $\left|\frac{w}{re^{i\theta} - w}\right|^2 = \frac{w^2}{r^2 - 2wr\cos \theta + w^2}$. By the symmetry of cosine, it will suffice to estimate \begin{align*}
    \int_{I} \int_{0}^{\frac{\pi}{2}} \exp\left(\int_{\mathbb{R}} \log \frac{w^2}{r^2 - 2wr\cos \theta + w^2} \,d\mu_{J^*}(w) \right) \,d\theta \,dr.
\end{align*}On $\left[0, \frac{\pi}{2}\right]$, we have the elementary inequality $\cos \theta \le 1 - \frac{1}{4} \theta^2$. If $w \ge 0$, \begin{equation*}
    r^2 - 2wr\cos \theta + w^2 \ge r^2 - 2wr\left(1 - \frac{1}{4} \theta^2\right) + w^2 = (r-w)^2 + \frac{1}{2}wr\theta^2.
\end{equation*} If $w < 0$, we simply bound \begin{align*}
    r^2 - 2wr\cos \theta + w^2 \ge w^2. 
\end{align*} Therefore, since the integrand below is negative on $J^*\setminus J$, \begin{align*}
     \exp\left(\int_{\mathbb{R}} \log \frac{w^2}{r^2 - 2wr\cos \theta + w^2} \,d\mu_{J^*}(w) \right) &\le \exp\left(\int_{\mathbb{R}} \log \frac{w^2}{(r-w)^2 + \frac{1}{2}wr\theta^2} \,d\mu_{J}(w) \right) \\ &\le  \exp\left(\int_{J} \log K_w(r, \theta) \,d\mu(w) \right),
\end{align*} where $$K_{w}(r, \theta) := \frac{16T^2}{(r-w)^2 + \frac{1}{4}T^2\theta^2}.$$ 

Let $B := \mu(J) \le \mu(\mathbb{R}) \le 1 - \alpha.$ If $B = 0$, $\int_{0}^{\frac{\pi}{2}} \int_{I} \exp\left(\int_{J} \log K_{w}(r, \theta) \,d\mu(w) \right) \,dr \,d\theta \lesssim T$, so we assume $B > 0$. By Jensen's inequality, \begin{equation*}
    \exp\left(\int_{J} \log K_w(r, \theta) \,d\mu(w)\right) = \exp\left(\int_{J} \log K_w(r, \theta)^B \,\frac{d\mu(w)}{B}\right) \le \int_{J} K_w(r, \theta)^B \,\frac{d\mu(w)}{B}.
\end{equation*} Therefore, by Tonelli's theorem, \begin{align*}
    \int_{I} \int_{0}^{\frac{\pi}{2}}\exp\left(\int_J \log K_w(r, \theta) \,d\mu(w)\right) \,d\theta \,dr &\le \int_{J} \int_{I} \int_{0}^{\frac{\pi}{2}}  K_w(r, \theta)^B \,d\theta \,dr \,\frac{d\mu(w)}{B}.
\end{align*} Since $\frac{\mu}{B}$ restricts to a probability measure on $J$, it suffices to prove the estimate \begin{align}\label{ineqafterHolderplanar}
    \int_{I}\int_{0}^{\frac{\pi}{2}} K_w(r, \theta)^B \,d\theta \,dr &\lesssim \frac{T}{1 - B}
\end{align} uniformly in $w \in J.$ Set $u = \frac{r-w}{T}$. Then $dr = Tdu$, and $r \in [T, 2T], w \in [T/2, 4T]$ implies \begin{align*}
    u \in \left[\frac{T - w}{T}, \frac{2T - w}{T}\right] \subset \left[-3, \frac{3}{2}\right].
\end{align*} Hence, \begin{equation}\label{doubleintKProp4}
    \int_{I} \int_{0}^{\frac{\pi}{2}} K_w(r, \theta)^B \,dr \,d\theta \le T 16^B  \int_{-3}^{\frac{3}{2}} \int_{0}^{\frac{\pi}{2}}\left(u^2 + \frac{1}{4} \theta^2\right)^{-B} \,d\theta \,du \lesssim \frac{T}{1- B},
\end{equation} completing the proof of~\eqref{ineqafterHolderplanar},~\eqref{hintestimatetau0}, and~\eqref{OmegaTbound}.

We now prove~\eqref{convexplanaralpharecipbound} for arbitrary
$0<\tau<T<\infty$ under the support assumption~\eqref{supportassump}.
Decompose
\begin{equation*}
    \mu_{\tau/2} :=\mu\!\restriction_{\mathbb{R}\setminus(-\tau/2,\tau/2)}, \qquad
    \nu_{\tau/2}
    :=\mu\!\restriction_{(-\tau/2,\tau/2)},
\end{equation*}
and let $\varphi_{\tau/2}'$ be defined by the normalized representation
\eqref{SchwarzChristoffelMeasure}, with $\mu$ replaced by $\mu_{\tau/2}$ and
with the same denominator $\varphi_{\tau/2}'(i)=\varphi'(i)$. Write
\begin{equation*}
    h_{\tau/2}(r) :=\int_0^\pi|\varphi_{\tau/2}'(re^{i\theta})|^2\,d\theta,
    \qquad
    |\Omega_r^{(\tau/2)}|
    :=\int_0^r t h_{\tau/2}(t)\,dt.
\end{equation*}
Also set
\begin{equation*}
    m(\tau)
    :=\nu_{\tau/2}(\mathbb{R}),
    \qquad
    a_{\tau}
    :=\exp\left(
      2\int_{(-\tau/2,\tau/2)}\log|i-w|\,d\mu(w)
      \right).
\end{equation*}
The normalized formula gives
\begin{align*}
    |\varphi'(re^{i\theta})|^2
    &=M_{\tau/2}(r,\theta)
      |\varphi_{\tau/2}'(re^{i\theta})|^2,
\end{align*}
where
\begin{align*}
    M_{\tau/2}(r,\theta)
    &=a_{\tau}\exp\left(
      2\int_{(-\tau/2,\tau/2)}
      \log\frac{1}{|re^{i\theta}-w|}\,d\mu(w)
      \right).
\end{align*}
If $r\geq\tau$ and $|w|<\tau/2$, then
$|re^{i\theta}-w|\geq\tau/2$. Since $m(\tau)\leq1$, it follows that
\begin{align*}
    M_{\tau/2}(r,\theta)
    &\leq 4a_{\tau}\tau^{-2m(\tau)}.
\end{align*}
Consequently,
\begin{align}\label{OmegaTOmegatauPositiveBound}
    |\Omega_T|
    &\leq |\Omega_{\tau}|
      +4a_{\tau}\tau^{-2m(\tau)}|\Omega_T^{(\tau/2)}|.
\end{align}

In the opposite direction, if $0<r<\tau/2$ and $|w|<\tau/2$, then
$|re^{i\theta}-w|\leq\tau$, and hence
\begin{align*}
    M_{\tau/2}(r,\theta)
    &\geq a_{\tau}\tau^{-2m(\tau)}.
\end{align*}
The measure $\mu_{\tau/2}$ is compactly supported and has no mass in
$(-\tau/2,\tau/2)$. Thus $\varphi_{\tau/2}'$ extends holomorphically across
this interval. After multiplication by a unimodular constant, it is real on
the interval, so, by Schwarz reflection, it extends holomorphically to $B_{\tau/2}$. The mean value
inequality for the subharmonic function $|\varphi_{\tau/2}'|^2$ gives
\begin{align*}
    h_{\tau/2}(r)
    &\geq h_{\tau/2}(0)
      :=\pi|\varphi_{\tau/2}'(0)|^2,
      \qquad 0<r<\frac{\tau}{2}.
\end{align*}
It follows that
\begin{equation*}
    |\Omega_{\tau}|
    \geq a_{\tau}\tau^{-2m(\tau)}
      \int_0^{\tau/2}t h_{\tau/2}(t)\,dt
    \geq \frac18a_{\tau}\tau^{2-2m(\tau)}h_{\tau/2}(0),
\end{equation*}
or equivalently
\begin{align}\label{hOmegatauBound}
    a_{\tau}\tau^{-2m(\tau)}h_{\tau/2}(0)
    &\leq \frac{8|\Omega_{\tau}|}{\tau^2}.
\end{align}
Applying~\eqref{OmegaTbound} to $\mu_{\tau/2}$ gives
\begin{align}\label{OmegaThtau}
    |\Omega_T^{(\tau/2)}|
    &\leq \frac{C_1}{\alpha}T^2h_{\tau/2}(0)
\end{align}
for an absolute constant $C_1$. Combining
\eqref{OmegaTOmegatauPositiveBound}, \eqref{hOmegatauBound}, and
\eqref{OmegaThtau}, and using $\tau<T$ and $\alpha\leq1$, yields
\begin{equation*}
    \frac{|\Omega_T|}{T^2}
    \leq \frac{|\Omega_{\tau}|}{T^2}
      +\frac{32C_1}{\alpha}\frac{|\Omega_{\tau}|}{\tau^2} \leq \frac{C_0}{\alpha}\frac{|\Omega_{\tau}|}{\tau^2}.
\end{equation*}
This proves the claim under~\eqref{supportassump}.

\medskip
\noindent\emph{Step 2: approximation of a general measure.}
We first address the case of a possible atom at the origin. Write
$a=\mu(\{0\})$ and $\mu_0=\mu-a\delta_0$, and let $h_0$ and
$|\Omega_r^{(0)}|$ denote the quantities associated with $\mu_0$, using the
same normalization at $i$. The normalized representation gives
\begin{equation*}
    h(r)
    =r^{-2a}h_0(r),
    \qquad
    |\Omega_r|
    =\int_0^r t^{1-2a}h_0(t)\,dt.
\end{equation*}
Since $t\mapsto t^{-2a}$ is decreasing,
\begin{equation*}
    \frac{d}{dr}\log|\Omega_r|
    =\frac{r^{1-2a}h_0(r)}
      {\int_0^r t^{1-2a}h_0(t)\,dt}
    \leq \frac{rh_0(r)}{\int_0^r t h_0(t)\,dt}
     =\frac{d}{dr}\log|\Omega_r^{(0)}|.
\end{equation*}
After integration from $\tau$ to $T$, this shows that the ratio of the left and right hand sides appearing
in~\eqref{convexplanaralpharecipbound} for $\mu$ is no larger than the
corresponding quotient for $\mu_0$. It therefore suffices to consider the case
$\mu(\{0\})=0$.

For $\varepsilon \in (0, 1)$, define
\begin{align*}
    \mu_{[\varepsilon]}
    &:=\mu\!\restriction_{A_\varepsilon},    
\end{align*}
and set
\begin{align}\label{varphiPrimeR}
    \frac{\varphi_{[\varepsilon]}'(z)}{\varphi'(i)}
    &:=\exp\left(
      \int_{\mathbb{R}}
      \log\left(\frac{i-w}{z-w}\right)\,d\mu_{[\varepsilon]}(w)
      \right).
\end{align}
Thus $\varphi_{[\varepsilon]}'(i)=\varphi'(i)$. For every fixed
$z\in\mathbb{H}$, the function
\begin{align*}
    w\longmapsto\log\left|\frac{i-w}{z-w}\right|
\end{align*}
is bounded on $\mathbb{R}$ and tends to zero as $|w|\to\infty$.
Since $\mu(\{0\})=0$, dominated convergence therefore gives
$\varphi_{[\varepsilon]}'(z)\to\varphi'(z)$ pointwise in $\mathbb{H}$.

We record why this convergence also passes through the area integrals. Choose
$q>1$ so that $q(1-\alpha)<1$, and put
$b_\varepsilon=\mu_{[\varepsilon]}(\mathbb{R})$. If $b_\varepsilon>0$, Jensen's inequality gives
\begin{align*}
    \left|\frac{\varphi_{[\varepsilon]}'(z)}{\varphi'(i)}\right|^{2q}
    &\leq \int_{\mathbb{R}}
      \left|\frac{i-w}{z-w}\right|^{2qb_\varepsilon}
      \frac{d\mu_{[\varepsilon]}(w)}{b_\varepsilon}.
\end{align*}
For every fixed $r>0$,
\begin{align*}
    \sup_{w\in\mathbb{R}}
    \sup_{0\leq p\leq2q(1-\alpha)}
    \int_{B_r^+}\left|\frac{i-w}{z-w}\right|^p\,dz
    &<\infty,
\end{align*}
because $2q(1-\alpha)<2$: for bounded $w$ this is the elementary local
integrability of $|z-w|^{-p}$ in two dimensions, while for large $|w|$ the
ratio is uniformly bounded. Hence, $\{\varphi_{[\varepsilon]}'\}_\varepsilon$ is bounded in
$L^{2q}(B_r^+)$. The functions $|\varphi_{[\varepsilon]}'|^2$ are therefore uniformly
integrable on $B_r^+$, so pointwise convergence and Vitali's convergence theorem imply
\begin{align}\label{OmegaRConvergence}
    \lim_{R\to\infty}
    \int_{B_r^+}|\varphi_{[\varepsilon]}'(z)|^2\,dz
    &=\int_{B_r^+}|\varphi'(z)|^2\,dz
     =|\Omega_r|.
\end{align}
For each $\varepsilon$, Step 1 applied to $\mu_{[\varepsilon]}$ gives
\begin{align*}
    \frac{1}{T^2}\int_{B_T^+}|\varphi_{[\varepsilon]}'|^2\,dz
    &\leq \frac{C_0}{\alpha}\frac{1}{\tau^2}
      \int_{B_\tau^+}|\varphi_{[\varepsilon]}'|^2\,dz.
\end{align*}
Letting $\varepsilon\to 0$ and using~\eqref{OmegaRConvergence} proves
\eqref{convexplanaralpharecipbound} and completes the proof.

\end{proof}
\end{proposition}

\subsection{Level-set flow coordinates on the upper-half plane}\label{LevelSetFlowSection}
We now turn to the case when the pole $p$ is in the interior of $\Omega$. As a preliminary discussion, in this section, we analyze the model case of the upper half-plane with interior pole $p=i$. Writing
\[
G(z) = G_{\mathbb{H}}(z, i) = \log |F(z)|, \qquad F(z) = z^2 + 1,
\]
for the Neumann Green's function with pole at $p = i$, and $R=e^G$, we introduce coordinates adapted to the level-set flow of $R$. To be precise, given $z \in \mathbb{H}$, we define its $\theta$ coordinate to be the value in $[0, 2\pi)$ such that the flow \begin{align}\label{levelsetflowH}\begin{split}
    &\dot{z}(t) = \frac{\nabla R}{|\nabla R|^2}(z(t)), t > 0\\
    &z(0) = i,\\
    &\lim_{t\to 0^+} \frac{\dot{z}(t)}{|\dot{z}(t)|} = (\cos \theta, \sin \theta)
\end{split}
\end{align} passes through $z.$ Note that in~\eqref{levelsetflowH}, the gradient $\nabla G(z) = \frac{2\bar{z}}{\bar{z}^2 + 1}$ is non-vanishing in $\mathbb{H}\setminus \{i\}$. Therefore, $\nabla R = R \nabla G$ is also nonvanishing in $\mathbb{H}\setminus \{i\}.$ We can now verify that the flow~\eqref{levelsetflowH} is well defined, by giving an explicit formula for $\theta$. On a simply connected subset of $\mathbb{H}\setminus \{i\}$, we can choose a branch of the logarithm so that \begin{align*}
    \log F(z) &= \log |F(z)| + i \text{arg} F(z).
\end{align*} Thus, $|F(z)|, \text{arg} F(z)$ are harmonic conjugates. If we define $\Theta := \text{arg} F(z)$, which is well-defined up to a multiple of $2\pi$, then \begin{align}\label{ThetaHodgeStar}
    \,d\Theta = \star dG,
\end{align} where $\star$ is the Hodge star operator. Concretely, if $(u, v)$ are Euclidean coordinates on $\mathbb{H}$, $z = u + iv$, then \begin{equation*}
    dG = G_u du + G_v dv, \qquad d\Theta = -G_v du + G_u dv.
\end{equation*} Note that \begin{align*}
    \frac{d}{dt}R(z(t)) &= \nabla R(z(t)) \cdot \dot{z}(t) = 1,
\end{align*} by~\eqref{levelsetflowH}. Since $R(z(0)) = 0$, it follows that $R(z(t)) = t$ for all $t\ge 0$. On the other hand, \begin{align*}
    \frac{d}{dt}\Theta(z(t)) &= \nabla \Theta(z(t)) \cdot \dot{z}(t) = 0,
\end{align*} by~\eqref{ThetaHodgeStar} and~\eqref{levelsetflowH}. Therefore, $\Theta$ is constant along the flow. We have \begin{equation*}
    \Theta(z(t)) = \text{arg}(z^2(t) + 1) = \text{arg}(z(t)+i) + \arg(z(t)-i).
\end{equation*} Sending $t\to 0^+$, we obtain, from~\eqref{levelsetflowH}, \begin{equation*}
    \Theta(z(t)) = \text{arg}(2i) + \theta = \theta + \frac{\pi}{2}.
\end{equation*} Therefore, $\theta$ agrees with the harmonic conjugate of $G$, up to an additive constant. Since $G, \Theta$ are harmonic conjugates, they give conformal coordinates on $\mathbb{H} \setminus \{i\}$, with the metric given by \begin{align*}
    g &= |\nabla G|^{-2}(dG^2 + \,d\Theta^2).
\end{align*} Since $\,d\Theta = \,d\theta$, we obtain \begin{align}\label{zthetapartialIdentity}
    |\nabla G| |\partial_{\theta} z| &= 1.
\end{align}

\begin{lemma}\label{flowlines3pi/2} The only time a flow line $z(t)$ in~\eqref{levelsetflowH} hits $\partial\mathbb{H}$ at a finite time is when $\theta = \frac{3\pi}{2}$ and $t = 1$.
\begin{proof}
Assume first that $\theta = \frac{3\pi}{2}$. Since $\Theta$ is constant on the flow line, and $\Theta = \theta + \frac{\pi}{2} = 0$ (mod $2\pi$), we have $\text{arg}(z^2(t) + 1) = 0$ for all $z(t) \in \mathbb{H}$ on the flow line. This condition is equivalent to $1 + z^2(t) \in (0, \infty)$. If we write $z(t) = u(t) + iv(t)$, then we must have $u(t) = 0$ and $v(t) \in (0, 1)$. If we now use that $t = R(z(t)) = R(iv(t))$, we conclude that $z(t) = i \sqrt{1-t}$ for $t \in [0, 1]$. In particular, $z(1) = 0 \in \partial \Omega$.

Assume now that $\theta \neq \frac{3\pi}{2}.$ If $z(t) \in \partial \mathbb{H}$ for some finite $t = t_0$, then $\theta + \frac{\pi}{2} = \Theta(z(t_0)) = \text{arg}(z^2(t_0) + 1) = 0.$ This implies that $\theta = \frac{3\pi}{2}$, a contradiction.
\end{proof}
\end{lemma}

\subsection{Almost-monotonicity for a pole in the interior}

\begin{proposition}\label{planaralmostmonPoleInterior} Let $\Omega$ be an unbounded, open convex set in $\mathbb{R}^2$ containing a sector of opening angle $\pi \alpha > 0$. Let $u_1, u_2 \in H^1_{\text{loc}}({\Omega})$ be continuous, nonnegative, and subharmonic functions in $\Omega$ with vanishing product. Assume additionally the weak Neumann boundary conditions $\partial_{\nu} u_i = 0$ on $\partial \Omega$. Let $p \in \Omega$ with $u_1(p) = u_2(p) = 0$, and let $G = \log(|\psi(x)+i||\psi(x)-i|)$ be as in~\eqref{Greensfunctiondefpinterior}. Then, the function $\Phi$ \eqref{Phi2D} satisfies\begin{align*}
    \int_{\tau}^{T} \frac{\Phi'(r)}{\Phi(r)} \,dr &\ge -C + 2\log \alpha,
\end{align*} uniformly in $0 < \tau < T < \infty$, for an absolute constant $C \in \mathbb{R}.$
\begin{proof}
We have \begin{align*}
    \Phi(r) &= \frac{1}{|\Omega_r|^2}\int_{\Omega_r}|\nabla u_1(x)|^2 \,dx \int_{\Omega_r}|\nabla u_2(x)|^2 \,dx,
\end{align*}where, changing variables, we have \begin{equation*}
    \int_{\Omega_r}|\nabla u_i|^2 \,dx = \int_{\psi(\Omega_r)}|\nabla u_i(\varphi(z))|^2|\varphi'(z)|^2 \,dz = \int_{W_r}|\nabla v_i(z)|^2 \,dz,
\end{equation*} where $v_i = u_i \circ \varphi$ and $W_r$ denotes $\{z \in \mathbb{H}: G_{\mathbb{H}}(z, i) = \log(|z+i||z-i|) < \log r\}$. Let $R_{\mathbb{H}}(z, i) = \exp(G_{\mathbb{H}}(z, i)).$ By the coarea formula, \begin{align*}
    \frac{\Phi'(r)}{\Phi(r)} &= \frac{\frac{d}{dr} \int_{W_r} |\nabla v_1(z)|^2 \,dz}{\int_{W_r} |\nabla v_1(z)|^2 \,dz} + \frac{\frac{d}{dr} \int_{W_r} |\nabla v_2(z)|^2 \,dz}{\int_{W_r} |\nabla v_2(z)|^2 \,dz} - \frac{2|\Omega_r|'}{|\Omega_r|} \\ &= \frac{\int_{\partial W_r \cap \mathbb{H}} |\nabla v_1(z)|^2 \frac{\,d\sigma(z)}{|\nabla R_{\mathbb{H}}(z, i)|} }{\int_{W_r}|\nabla v_1(z)|^2 \,dz} + \frac{\int_{\partial W_r \cap \mathbb{H}} |\nabla v_2(z)|^2 \frac{\,d\sigma(z)}{|\nabla R_{\mathbb{H}}(z, i)|} }{\int_{W_r}|\nabla v_2(z)|^2 \,dz} - \frac{2|\Omega_r|'}{|\Omega_r|}.
\end{align*} Define $\lambda_i(r)$ to be the largest value so that \begin{align}\label{EigenvalueHpoleati}
    \int_{\partial W_r \cap \mathbb{H}} |\nabla_T v(z)|^2 \frac{\,d\sigma(z)}{|\nabla R_{\mathbb{H}}(z, i)|} &\ge \lambda_i(r) \int_{\partial W_r \cap \mathbb{H}} v^2(z) |\nabla R_{\mathbb{H}}(z, i)| \,d\sigma(z),
\end{align} for all $v \in H^1(\partial W_r \cap \mathbb{H})$ with the same support conditions as $v_i$. For $r < 1$, the  $v$ defining the eigenvalues are taken to have the same supports as $v_i$ on $\partial W_{r} \cap \mathbb{H}$. For $r \ge 1$, the same is true of the $v$, with the interpretation that the values of $v$ at the two (or one, when $r = 1$) points in $\partial({\partial W_r \cap \mathbb{H}})$ are free if $v_i$ does not extend continuously to $0$ there. Then, proceeding as in the proof of~\eqref{FHreductionpolebdr} in Proposition~\ref{planaralmostmonPoleBoundary}, we obtain \begin{align}\label{ACFprocHwithipole}
    \frac{\Phi'(r)}{\Phi(r)} &\ge 2(\sqrt{\lambda_1(r)} + \sqrt{\lambda_2(r)}) - \frac{2|\Omega_r|'}{|\Omega_r|}.
\end{align} For the two integrals in~\eqref{EigenvalueHpoleati}, we have, by~\eqref{zthetapartialIdentity}, \begin{equation*}
    \int_{\partial W_r \cap \mathbb{H}} |\nabla_T v(z)|^2 \frac{\,d\sigma(z)}{|\nabla R_{\mathbb{H}}(z, i)|} = \int_{0}^{2\pi} \left|\left\langle dv, \frac{1}{|\partial_{\theta} z|} \,d\theta \right\rangle \right|^2 \frac{1}{|\nabla R_{\mathbb{H}}|} |\partial_{\theta} z| \,d\theta = \frac{1}{r}\int_{0}^{2\pi} v'(\theta)^2 \,d\theta
\end{equation*} and \begin{equation*}
    \int_{\partial W_r \cap \mathbb{H}} v^2(z) |\nabla R_{\mathbb{H}}(z, i)| \,d\sigma(z) = \int_{0}^{2\pi} v^2(\theta) |\nabla R_{\mathbb{H}}||\partial_{\theta} z| \,d\theta = r \int_{0}^{2\pi} v^2(\theta) \,d\theta.
\end{equation*} Note that Lemma~\ref{flowlines3pi/2} implies that we may in fact write these as integrals on the full $[0, 2\pi]$ range, up to a singleton, which is of measure zero. Therefore, we may rewrite~\eqref{EigenvalueHpoleati} so that $\lambda_i(r)$ is the largest constant so that \begin{align*}
    \int_{0}^{2\pi} v'(\theta)^2 \,d\theta &\ge r^2 \lambda_i(r) \int_{0}^{2\pi} v^2(\theta) \,d\theta
\end{align*}for all $v \in H^1([0, 2\pi])$ with the same support conditions as $v_i$. For $r < 1$, the optimal partition is when the support of $v_1$ is $[0, \pi]$ and the support of $v_2$ is $[\pi, 2\pi],$ and the corresponding $v$'s are chosen as $\sin \theta$ and $-\sin \theta$ respectively. Therefore, \begin{align}\label{rleq1eigbound}
    \sqrt{\lambda_1(r)} + \sqrt{\lambda_2(r)} &\ge \frac{2}{r}, \text{   }r < 1.
\end{align}

For $r \ge 1$, $\theta = \frac{3\pi}{2}$ is free by Lemma~\ref{flowlines3pi/2}. The optimal choice occurs for $v = \cos\left(\frac{\theta}{2} + \frac{\pi}{4}\right)$ on $\left[-\frac{\pi}{2}, \frac{\pi}{2}\right]$ and $v = \sin\left(\frac{\theta}{2} - \frac{\pi}{4}\right)$ on $\left[\frac{\pi}{2}, \frac{3\pi}{2}\right]$, which gives \begin{align}\label{rgtr1eigbound}
    \sqrt{\lambda_1(r)} + \sqrt{\lambda_2(r)} &\ge \frac{1}{r}, \text{   }r \ge 1.
\end{align}

\textbf{Case 1.} $0 < \tau < T < 1.$ For $r \in [\tau, T]$, we have, by~\eqref{ACFprocHwithipole} and~\eqref{rleq1eigbound}, \begin{align*}
    \frac{\Phi'(r)}{\Phi(r)} &\ge \frac{4}{r} - \frac{2|\Omega_r|'}{|\Omega_r|}.
\end{align*} Therefore, \begin{equation*}
    -\int_{\tau}^{T} \frac{\Phi'(r)}{2\Phi(r)} \,dr \le \int_{\tau}^{T} \left(\frac{|\Omega_r|'}{|\Omega_r|} - \frac{2}{r} \right)\,dr = \log\left(\frac{|\Omega_T| \tau^2}{T^2|\Omega_{\tau}|} \right).
\end{equation*}We must prove \begin{align}\label{Case1VolIneqH}
    \frac{|\Omega_T|}{T^2} &\lesssim \frac{1}{\alpha}\frac{|\Omega_{\tau}|}{\tau^2}.
\end{align} The estimate~\eqref{Case1VolIneqH} will follow from \begin{align}\label{OmegaTCase1H}
    |\Omega_T| &\lesssim \frac{|\varphi'(i)|^2 T^2}{\alpha}
\end{align} and \begin{align}\label{OmegatauCase1H}
    |\Omega_{\tau}| &\gtrsim |\varphi'(i)|^2\tau^2.
\end{align} Note that $\varphi'(i)$ is nonvanishing, since $i$ is an interior point of $\mathbb{H}$ and $\varphi$ is conformal. We begin with the first inequality. By the change of variables formula, \begin{align*}
    |\Omega_T| &= \int_{W_{T}}|\varphi'(z)|^2 \,dz.
\end{align*} For $z \in \mathbb{H}$, $|z+i| > 1$. Therefore, \begin{align*}
    W_{T} \subset B_T(i),
\end{align*} and we have \begin{align*}
    \frac{|\Omega_T|}{|\varphi'(i)|^2} &\le \int_{B_T(i)} \frac{|\varphi'(z)|^2}{|\varphi'(i)|^2} \,dz.
\end{align*} Arguing as in the proof of Proposition~\ref{planaralmostmonPoleBoundary}, we conclude the estimate~\eqref{OmegaTCase1H}. For $\tau < 1$, we have the elementary inclusion \begin{align}\label{tau3Wtauinclusion}
    B_{\tau/3}(i) \subset W_{\tau}.
\end{align} For $z \in B_{\tau/3}(i)$, we have \begin{equation*}
    |z-w| \le |z-i| + |w-i| \le \frac{1}{3} + |w-i|.
\end{equation*} Since $|w - i| \ge 1$ for any $w \in \mathbb{R}$, \begin{align*}
    |z - w| \le \frac{4}{3}|w-i|.
\end{align*} For $z \in B_{\tau/3}(i),$ we therefore have \begin{equation*}
    \frac{|\varphi'(z)|^2}{|\varphi'(i)|^2} \ge \exp\left(2 \int_{\mathbb{R}} \log \left| \frac{i - w}{z - w}\right| \,d\mu(w) \right) \ge \exp\left(2\mu(\mathbb{R}) \log \frac{3}{4}\right) \ge \left(\frac{3}{4}\right)^2,
\end{equation*} because $\mu(\mathbb{R}) \le 1$ and $\log \frac{3}{4} < 0.$ Hence, by~\eqref{tau3Wtauinclusion}, \begin{equation*}
    \frac{|\Omega_{\tau}|}{|\varphi'(i)|^2} \ge \int_{B_{\tau/3}(i)} \left(\frac{3}{4}\right)^2 \,dz \gtrsim \tau^2.
\end{equation*} This proves~\eqref{OmegatauCase1H}, and finishes the proof of Case 1.

\textbf{Case 2.} $0 < \tau < 1 < T < \infty.$ For $r \in [\tau, 1),$ we have by~\eqref{ACFprocHwithipole} and~\eqref{rleq1eigbound}, \begin{align*}
    \frac{\Phi'(r)}{\Phi(r)} &\ge \frac{4}{r} - \frac{2|\Omega_r|'}{|\Omega_r|}.
\end{align*} On the other hand, for $r \in [1, T]$,~\eqref{ACFprocHwithipole} and~\eqref{rgtr1eigbound} yield \begin{align*}
    \frac{\Phi'(r)}{\Phi(r)} &\ge \frac{2}{r} - \frac{2|\Omega_r|'}{|\Omega_r|}.
\end{align*} Therefore, \begin{equation*}
    -\int_{\tau}^{T} \frac{\Phi'(r)}{2\Phi(r)} \,dr \le \int_{\tau}^{1} \left(\frac{|\Omega_r|'}{|\Omega_r|} - \frac{2}{r} \right)\,dr + \int_{1}^{T}\left(\frac{|\Omega_r|'}{|\Omega_r|} - \frac{1}{r}\right) \,dr = \log\left(\frac{|\Omega_T|\tau^2}{T|\Omega_{\tau}|^2}\right).
\end{equation*} Thus it suffices to prove \begin{align*}
    \frac{|\Omega_T|}{T} &\lesssim \frac{1}{\alpha}\frac{|\Omega_{\tau}|^2}{\tau^2},
\end{align*} which in view of~\eqref{OmegatauCase1H}, is implied by \begin{align}\label{Case2HOmegaTbound}
    |\Omega_T| &\lesssim \frac{|\varphi'(i)|^2 T}{\alpha}.
\end{align} Since $T > 1$, we have \begin{align}\label{WTcontained2sqrtTball}
    W_T \subset B_{2\sqrt{T}}^+.
\end{align} We may now run the argument that was used to prove~\eqref{OmegaTCase1H} to conclude~\eqref{Case2HOmegaTbound}.

\textbf{Case 3.} $1\le \tau < T < \infty.$ The bound to prove is now \begin{align}\label{Case3Hbound}
    \frac{|\Omega_T|}{T} &\lesssim \frac{1}{\alpha} \frac{|\Omega_{\tau}|}{\tau}.
\end{align}

\textbf{Subcase 1.} $1\le \tau \le 2.$ We have, by~\eqref{OmegatauCase1H}, \begin{equation*}
    |\Omega_{\tau}| \ge |\Omega_{\frac{1}{2}}| \gtrsim |\varphi'(i)|^2.
\end{equation*} Therefore, the desired bound~\eqref{Case3Hbound} follows from~\eqref{Case2HOmegaTbound}.

\textbf{Subcase 2.} $\tau \ge 2.$ We have\begin{align*}
    B_{\sqrt{\tau}/2}^+ \subset W_{\tau}.
\end{align*}In view of~\eqref{WTcontained2sqrtTball} as well, the estimate~\eqref{Case3Hbound} follows from the bound~\eqref{convexplanaralpharecipbound} in Proposition~\ref{planaralmostmonPoleBoundary}. This completes the proof of Case 3 and the theorem.
\end{proof}
\end{proposition}

\subsection{Sharpness of almost-monotonicity}\label{CounterexampleConvexPlanarSection}

The ACF monotonicity formula \cite{ACF84} is the beautiful pure monotonicity statement $\Phi_{\mathbb{R}^n}'(r) \ge 0.$ One may ask if the same is true for our functional $\Phi$. In the ACF argument, the proof reduces to the Friedland-Hayman inequality \cite{FH}. We followed this same framework in the proofs of Proposition~\ref{planaralmostmonPoleBoundary} and Proposition~\ref{planaralmostmonPoleInterior}. Indeed, in~\eqref{FHreductionpolebdr},~\eqref{ACFprocHwithipole}, we have the inequality \begin{align*}
    \frac{\Phi'(r)}{2\Phi(r)} &\ge \sqrt{\lambda_1(r)} + \sqrt{\lambda_2(r)} - \frac{|\Omega_r|'}{|\Omega_r|}.
\end{align*} For the pole on the boundary case, and the pole in the interior case for $r < 1$, we used \begin{align*}
    \sqrt{\lambda_1(r)} + \sqrt{\lambda_2(r)} &\ge \frac{2}{r},
\end{align*}so that\begin{align*}
    \frac{\Phi'(r)}{\Phi(r)} &\ge \frac{4}{r} - \frac{2|\Omega_r|'}{|\Omega_r|}.
\end{align*} What we show here is that the right-hand side can be negative, and, moreover, Proposition~\ref{planaralmostmonPoleBoundary} is sharp as an estimate for the right-hand side. While this does not directly imply that $\Phi'$ can be negative, it means that new ideas and a refinement of the ACF procedure would be needed in order to prove pure monotonicity of $\Phi$.

\begin{proposition}\label{sectoralmmonnearpole}
If $\Omega = \{te^{i\theta}: 0 < \theta < \pi \alpha, t> 0\}$ for $\alpha \in (0, 1)$ and $p = a \in \mathbb{R}_{+}$, then \begin{align*}
    \frac{4}{r} - \frac{2|\Omega_r|'}{|\Omega_r|} &< 0
\end{align*} for sufficiently small $r > 0$.
\begin{proof}
A conformal map $\psi: \Omega \to \mathbb{H}$ with pole at $a \in \mathbb{R}_{+}$ mapping $p$ to $0$ is given by $\psi(z) = (\alpha z)^{\frac{1}{\alpha}} - (\alpha a)^{\frac{1}{\alpha}},$ so that $G = \log |\psi|$. The inverse map is given by $\varphi(z) = \frac{1}{\alpha}(z + (\alpha a)^{1/\alpha})^{\alpha}$, so that $\varphi'(z) = (z + (\alpha a)^{1/\alpha})^{-(1-\alpha)}$. Recalling~\eqref{OmegarChangeofVarFormula}, we write $|\Omega_r| = \int_{0}^{r} t h(t) \,dt,$ where $h(t) = \int_{0}^{\pi} |\varphi'(te^{i\theta})|^2 \,d\theta.$ Note that the inequality $\frac{4}{r} - \frac{2|\Omega_r|'}{|\Omega_r|} < 0$ is equivalent to \begin{align}\label{hincineq}
    2 \int_{0}^{r} t h(t) \,dt &< r^2 h(r).
\end{align} Note that~\eqref{hincineq} will follow if $h$ is strictly increasing on $[0, r]$. We now show that $h'(0) = 0, h''(0) > 0$, which implies that $h$ is indeed strictly increasing in some such neighborhood $[0, r]$ for sufficiently small $r$. For simplicity, we set $c := (\alpha a)^{\frac{1}{\alpha}} > 0$ and $\beta := 1 - \alpha.$ Then, \begin{equation*}
    h(t) = \int_{0}^{\pi} |te^{i\theta} + c|^{-2\beta} \,d\theta = \int_{0}^{\pi} (t^2 + 2tc \cos \theta + c^2)^{-\beta} \,d\theta \\= c^{-2\beta}
    \int_{0}^{\pi}
    \left(1+2\frac{t}{c}\cos\theta+\frac{t^2}{c^2}\right)^{-\beta}
    \,d\theta.
\end{equation*} Using the Taylor expansion
\begin{equation*}
    (1+x)^{-\beta}
    =
    1-\beta x+\frac{\beta(\beta+1)}{2}x^2+O(x^3), \qquad x=2\frac{t}{c}\cos\theta+\frac{t^2}{c^2},
\end{equation*}
we obtain
\begin{align*}
    h(t)
    &=
    \pi c^{-2\beta}
    +
    \pi \beta^2 c^{-2\beta-2} t^2
    +
    O(t^3).
\end{align*}
In particular,
\begin{equation*}
    h'(0)=0,
    \qquad
    h''(0)=2\pi\beta^2 c^{-2\beta-2}>0.
\end{equation*}
\end{proof}
\end{proposition}

In fact, the estimate for $\int_{\tau}^{T}\frac{4}{r} - \frac{2|\Omega_r|'}{|\Omega_r|} \,dr$ in Proposition~\ref{planaralmostmonPoleBoundary} is sharp, as we confirm in the next proposition.

\begin{proposition}\label{sharpnessofACFpoleboundary}
If $\Omega = \{t e^{i\theta}: 0 < t < \pi \alpha\}$ for $\alpha \in (0, 1)$, $p = a \in \mathbb{R}_{+},$ and $T = (\alpha a)^{\frac{1}{\alpha}},$ then, \begin{align*}
    \int_{0^+}^{T}\left(\frac{4}{r} - \frac{2|\Omega_r|'}{|\Omega_r|}\right) \,dr &\le -C + 2\log \alpha
\end{align*} for an absolute constant $C \in \mathbb{R}.$
\begin{proof}
For any $T > 0$, we have \begin{equation*}
   \int_{0^+}^{T} \left(\frac{|\Omega_r|'}{|\Omega_r|} - \frac{2}{r}\right) \,dr = \log\left(\frac{|\Omega_T|}{T^2}\right) - \lim_{r \to 0} \log\left(\frac{|\Omega_r|}{r^2}\right) = \log\left(\frac{|\Omega_T|}{\pi |\varphi'(0)|^2 T^2}\right),
\end{equation*} where $\varphi'(0) = (\alpha a)^{-\frac{1-\alpha}{\alpha}} \neq 0$ as in the proof of Proposition~\ref{sectoralmmonnearpole}. We use the notation of the previous proposition throughout. Therefore, our desired inequality is equivalent to \begin{align*}
    \frac{|\Omega_T|}{|\varphi'(0)|^2 T^2} &\gtrsim \frac{1}{\alpha}.
\end{align*} We write $|\Omega_T| = \int_{0}^{T} t h(t) \,dt.$ Clearly, it is enough to show that $\frac{\int_{T/2}^{T} t h(t) \,dt}{|\varphi'(0)|^2 T^2} \gtrsim \frac{1}{\alpha}$, or equivalently, \begin{align}\label{inthlowerboundsector}
    \frac{1}{h(0)}\int_{T/2}^{T} h(t) \,dt &\gtrsim \frac{T}{\alpha}.
\end{align} We choose $T = c$. Then, \begin{align*}
    \frac{1}{h(0)}\int_{c/2}^{c} h(t) \,dt &= \frac{c^{2\beta}}{\pi} \int_{c/2}^{c} \int_{0}^{\pi} (t^2 + 2tc \cos\theta + c^2)^{-\beta} \,d\theta \,dt.
\end{align*} Let us make the change of variables $t = cs$. Then, \begin{align*}
    \frac{1}{h(0)}\int_{c/2}^{c} h(t) \,dt &= \frac{c}{\pi} \int_{1/2}^{1}\int_{0}^{\pi} (s^2 + 2s \cos \theta + 1)^{-\beta} \,d\theta \,dt.
\end{align*} Using the Taylor expansion of cosine and arguing as in the proof of Proposition~\ref{planaralmostmonPoleBoundary}, we obtain \begin{align*}
    \frac{1}{h(0)}\int_{c/2}^{c} h(t) \,dt &\asymp \frac{c}{\alpha}
\end{align*} as desired.
\end{proof}
\end{proposition}

We end by providing an analogous result to Proposition~\ref{sectoralmmonnearpole} for the case where the pole is in the interior.

\begin{proposition}
Let $\Omega = \mathbb{H}$ and $p = i$. Then, \begin{align*}
    \frac{4}{r} - \frac{2|\Omega_r|'}{|\Omega_r|} &< 0
\end{align*} for sufficiently small $r > 0.$
\begin{proof} Consider $r < 1.$ Let $\upsilon: B_{r} \to \Omega_r, w \to \sqrt{w - 1},$ which is well defined, because $\Re(1-w) > 0$ and $1-w$ can be made to avoid a branch cut of the logarithm. Then, \begin{equation*}
    |\Omega_r| = \int_{B_r} |\upsilon'(w)|^2 \,dw = \int_{B_r} \frac{1}{4|w-1|} \,dw.
\end{equation*} If we set $\xi(t) = \int_{0}^{2\pi}\frac{1}{4|te^{i\theta} - 1|} \,d\theta$, then \begin{align*}
    |\Omega_r| &= \int_{0}^{r} t \xi(t) \,dt.
\end{align*}From the computations in the proof of Proposition~\ref{sectoralmmonnearpole}, we have $\xi'(0) = 0, \xi''(0) > 0$, from which the desired inequality follows.
\end{proof}
\end{proposition}

\section{Almost-monotonicity of $\Phi$ for 2D manifolds}

\begin{assumption}\label{manifoldAssumption}
$M$ shall denote a 2-dimensional, complete, connected, oriented, noncompact smooth Riemannian manifold without boundary. Additionally, we assume that $M$ has nonnegative Gaussian curvature $K$, finite total curvature, and Euclidean volume growth.
\end{assumption}

We will use the notation $\alpha = 1 - \frac{1}{2\pi} \int_{M} K \,d\operatorname{vol}$, which we show in~\eqref{alphadef} is strictly positive. In the doubled convex domain model, this $\alpha$ corresponds to the largest $\alpha$ for which a sector of opening angle $\pi \alpha$ is contained in the convex domain.

\subsection{Conformal Green's function}\label{ConformalGreenSection}

\begin{proposition}\label{MdiffeoR2}
Any manifold $M$ satisfying Assumption~\ref{manifoldAssumption} is necessarily conformally equivalent to $\mathbb{C}$. 
\begin{proof}
Since $M$ is oriented and 2-dimensional, the metric determines a Riemann surface structure on $M$ with local isothermal coordinates. We have \begin{align*}
    K^{-} := \max\{-K, 0\} = 0,
\end{align*} since $K$ is nonnegative. Therefore, \begin{equation*}
    \int_{M} K^{-} \,d\operatorname{vol} = 0 < \infty.
\end{equation*} By work of Huber \cite{Hubersubharmonicdiffgeo}, it follows that $M$ is parabolic and of finite type, meaning \begin{align*}
    M \cong \widehat{M} \setminus \{p_1, .., p_k\}
\end{align*} for a compact $\widehat{M}$ and finitely many distinct points $p_i \in \widehat{M}, 1 \le i\le k.$ Note that we can assume we are deleting at least one point, since otherwise $M$ would be compact, contradicting Euclidean volume growth. Let $\gamma$ denote the genus of $\widehat{M}$ so that \begin{align*}
    \chi(M) &= 2 - 2\gamma - k.
\end{align*} By the Cohn-Vossen inequality \cite{CohnVossen1935}, \begin{equation*}
    \chi(M) \ge \frac{1}{2\pi}\int_{M} K \,d\operatorname{vol} \ge 0,
\end{equation*} where we use $K \ge 0$ in the last passage. Therefore, the only possibilities for $(\gamma, k)$ are $(\gamma, k) = (0, 1), (0, 2).$ 

In the first case, $M$ is diffeomorphic to the plane $\mathbb{R}^2$. In particular, $M$ is simply connected, and, by the uniformization theorem, is conformally equivalent to either the complex plane or the open disk. However, $M$ is parabolic, while the open disk is hyperbolic. Therefore, in this first case, $M$ is conformally equivalent to $\mathbb{C}.$

It remains to rule out the case $(\gamma, k) = (0, 2)$, where $M$ is diffeomorphic to a cylinder. In this case, $\chi(M) = 0$ and the Cohn-Vossen inequality implies that $K = 0$ identically. Hence, $M$ is a complete flat cylinder, contradicting Euclidean volume growth. This completes the proof.
\end{proof}
\end{proposition}

The fact, used in the proof of Propostion~\ref{MdiffeoR2}, that $M$ is of finite type allows us to conclude the positivity of $\alpha$ as well. Indeed, by a result of Shiohama \cite{Shiohama}, extending earlier work of Hartman \cite{Hartman} and Fiala \cite{Fiala}, any manifold $M$ satisfying Assumption~\ref{manifoldAssumption} satisfies\begin{align*}
    2\pi \chi(M) - \int_{M} K \,d\operatorname{vol} &= \lim_{t\to \infty} \frac{2\operatorname{vol}(B_t(p))}{t^2},
\end{align*} for any fixed $p \in M$, where $B_t(p)$ denotes the geodesic ball of radius $t$ centered at $p$. Note that $\lim_{t\to \infty} \frac{\operatorname{vol}(B_t(p))}{t^2}$ exists by the Bishop-Gromov theorem. Since $M$ has Euclidean volume growth, it follows that $\lim_{t\to \infty} \frac{\operatorname{vol}(B_t(p))}{t^2} > 0$. Furthermore, $\chi(M) = 1$ since $M$ is diffeomorphic to $\mathbb{R}^2$. Therefore, the quantity \begin{align}\label{alphadef}
    \alpha := 1- \frac{1}{2\pi} \int_{M} K \,d\operatorname{vol} > 0.
\end{align} Let $F: M \to \mathbb{C}$ denote an orientation-preserving conformal diffeomorphism with $F(p) = 0$, guaranteed to exist by Proposition~\ref{MdiffeoR2}. We define the \textit{conformal Green's function} with pole at $p$ by \begin{align*}
    G(x) := \log |F(x)|,
\end{align*} up to an additive constant, which we set to zero. That $G$ is indeed a Green's function can be proven as for the conformal Green's function in the planar case~\eqref{Greensfunctiondefpboundary},~\eqref{Greensfunctiondefpinterior}. We now construct conformal coordinates on $M$ that extend the coordinates we constructed in the upper half-plane in Section~\ref{LevelSetFlowSection}.

\begin{proposition}\label{GthetaCoordsProp} Let $(R, \theta)$ and $(u, v)$ be polar and Cartesian coordinates for $F$ respectively. Then: \begin{enumerate} \item \(G\) has no critical points on \(M\setminus\{p\}\). \item The map \begin{equation*} M\setminus\{p\}\to \mathbb R\times\mathbb T, \qquad x\mapsto (G(x),\theta(x)), \end{equation*} is a diffeomorphism. \item The angular one-form satisfies \begin{equation*} d\theta=\star dG. \end{equation*} \item The metric satisfies \begin{equation}\label{GthetaCoords} g=|\nabla G|^{-2}(dG^2+d\theta^2) \end{equation} on \(M\setminus\{p\}\). \item In the global isothermal coordinates \((u,v)\), the metric satisfies \begin{equation}\label{uvIsothermalCoords} g=|\nabla R|^{-2}(du^2+dv^2) \end{equation} on \(M\), with the conformal factor extending smoothly and positively across \(p\). \end{enumerate} \begin{proof} Since \(F\) is a diffeomorphism and \(F(p)=0\), it restricts to a diffeomorphism \begin{equation*} F:M\setminus\{p\}\to\mathbb C^*. \end{equation*} The polar-coordinate map \begin{equation*} \mathbb C^*\to \mathbb R\times\mathbb T, \qquad z\mapsto(\log|z|,\arg z), \end{equation*} is a diffeomorphism, with inverse \begin{equation*} (t,\vartheta)\mapsto e^{t+i\vartheta}. \end{equation*} Therefore \begin{equation*} x\mapsto (G(x),\theta(x)) \end{equation*} is a diffeomorphism from \(M\setminus\{p\}\) onto \(\mathbb R\times\mathbb T\). The function \(G\) has no critical points on \(M\setminus\{p\}\). Indeed, on \(\mathbb C^*\), $\log |z|$ has no critical points. Since \(G=\log|F|\) and \(F\) is a diffeomorphism, it follows that $G$ has no critical points on \(M\setminus\{p\}\). Next we identify the angular one-form. In the Euclidean plane, with its standard orientation, \begin{equation*} \star_{\mathbb C}d\log|z|=d\arg z \end{equation*} on \(\mathbb C^*\). Since \(F\) is orientation-preserving and conformal, the Hodge star on one-forms is preserved by pullback. Hence \begin{equation*} \star dG=d\theta \end{equation*} on \(M\setminus\{p\}\). It remains to compute the metric. The identity~\eqref{GthetaCoords} follows immediately from the second and third items. We now turn to~\eqref{uvIsothermalCoords}. Since \(F\) is conformal, \((u,v)\) are global isothermal coordinates, so \begin{equation*} g=\lambda^2(du^2+dv^2) \end{equation*} for some smooth positive function \(\lambda\) on \(M\). On \(M\setminus\{p\}\), $R=(u^2+v^2)^{1/2},$ and hence the Euclidean gradient of \(R\) has norm one. Since gradient norms scale by \(\lambda^{-1}\) under the conformal change $g=\lambda^2(du^2+dv^2),$ we obtain $|\nabla R|=\lambda^{-1},$ where the norm is taken with respect to the metric $g$. Consequently, \begin{equation*} g=|\nabla R|^{-2}(du^2+dv^2) \end{equation*} on \(M\setminus\{p\}\). Since \(\lambda\) is smooth and positive on all of \(M\), the conformal factor \begin{equation*} |\nabla R|^{-2}=\lambda^2 \end{equation*} extends smoothly and positively across \(p\).\end{proof} \end{proposition} 

When we have global conformal coordinates, there is a standard representation formula for the Gaussian curvature in terms of the conformal factor. By, for example, Theorem 13.1.3 in Dubrovin, Fomenko, and Novikov \cite{DFN}, and~\eqref{uvIsothermalCoords}, we obtain the following:

\begin{corollary}\label{CurvatureLaplacianIdentity} $\Delta \log |\nabla R| = K$.
\end{corollary}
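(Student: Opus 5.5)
The plan is to read the identity off from the classical formula for the Gaussian curvature of a conformal metric, applied in the global isothermal coordinates $(u,v)$ of Proposition~\ref{GthetaCoordsProp}. By~\eqref{uvIsothermalCoords} we may write $g=\lambda^2(du^2+dv^2)$ with $\lambda=|\nabla R|^{-1}$, where $\lambda$ is smooth and positive on all of $M$, including across $p$. For a metric of the form $e^{2\omega}(du^2+dv^2)$, the standard formula (Theorem 13.1.3 in \cite{DFN}) gives
\begin{equation*}
K=-e^{-2\omega}\Delta_0\omega,
\end{equation*}
where $\Delta_0=\partial_u^2+\partial_v^2$ is the flat Laplacian. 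In our situation $\omega=\log\lambda=-\log|\nabla R|$.

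The next step is to convert the flat Laplacian into the Laplace--Beltrami operator $\Delta$ of $g$. In two dimensions $\Delta$ is conformally covariant: for $g=e^{2\omega}g_0$ we have $\Delta=e^{-2\omega}\Delta_0$. Combining this with the curvature formula gives
\begin{equation*}
K=-\Delta\omega=\Delta\log|\nabla R|,
\end{equation*}
which is the claimed identity.

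It remains to confirm that the identity holds everywhere on $M$, including at $p$. Away from $p$, the function $R$ is smooth, and $|\nabla R|=\lambda^{-1}$ by the computation in Proposition~\ref{GthetaCoordsProp}. At $p$, the expression $|\nabla R|$ should be understood through its smooth extension $\lambda^{-1}$, which Proposition~\ref{GthetaCoordsProp} supplies. Since both sides of the identity are smooth, the identity at $p$ then follows by continuity.

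The main thing to check carefully is the sign convention: the paper's $\Delta$ must be the nonpositive Laplace--Beltrami operator, for which $\Delta=\operatorname{div}\nabla$. With that convention the signs above are consistent, and the curvature formula produces $+K$. Apart from this, the argument is a routine computation.
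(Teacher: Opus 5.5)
Your proposal is correct and is essentially the paper's argument: the paper simply cites the conformal curvature formula (Theorem 13.1.3 in \cite{DFN}) in the global isothermal coordinates of \eqref{uvIsothermalCoords}, and your computation $K=-e^{-2\omega}\Delta_0\omega$ with $\omega=-\log|\nabla R|$ and $\Delta_g=e^{-2\omega}\Delta_0$ just makes that citation explicit. Your reading of $\Delta$ as the Laplace--Beltrami operator of $g$, rather than the flat Laplacian, is the right one, and it is consistent with the later Riesz decomposition, which uses $\Delta_{\mathbb{C}}h = K e^{-2h}$.
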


\begin{remark}
An alternative approach to constructing conformal coordinates on $M$, that avoids the uniformization theorem, is to use the Li-Tam Green's function \cite{LiTamSymmetricGreensFcn} constructed by compact exhaustion. In fact, in our setting, the Li-Tam Green's function is unique and equals the conformal Green's function. By a result of Enciso and Peralta-Salas \cite{EncisoPeraltaSalas}, the Li-Tam Green's function on $M$ has no critical points, since $M$ is diffeomorphic to $\mathbb{R}^2$. Therefore, defining $d\theta = \star dG$, we obtain the conformal coordinates~\eqref{GthetaCoords}.
\end{remark}

\subsection{Riesz decomposition}

We now obtain a Riesz decomposition representation for
$\log |\nabla R|$ that will take the place of the Schwarz--Christoffel
formula in the almost-monotonicity proof in the manifold setting.

We identify $M$ with $\mathbb C$ using the conformal diffeomorphism $F$,
and write
\begin{equation*}
    h(z)
    :=
    \log|\nabla R|\bigl(F^{-1}(z)\bigr).
\end{equation*}
By~\eqref{uvIsothermalCoords},
\begin{equation*}
    g=e^{-2h}|dz|^2.
\end{equation*}
Moreover, Corollary~\ref{CurvatureLaplacianIdentity} implies that $h$ is a subharmonic function on all of
$\mathbb C$.

For a subharmonic function $v$ on $\mathbb C$, set
\begin{equation*}
    M(r,v)
    :=
    \max_{|z|=r}v(z),
    \qquad
    \beta(v)
    :=
    \lim_{r\to\infty}\frac{M(r,v)}{\log r}.
\end{equation*}
Belegradek and Hu~\cite{BelegradekHu} proved that the metric
$e^{-2v}|dz|^2$ is complete if and only if $\beta(v)\le 1$. Since
\begin{equation*}
    g=e^{-2h}|dz|^2
\end{equation*}
is complete, we have $\beta(h)\le 1$. Thus, for every
$\varepsilon>0$,
\begin{equation}\label{hlogupperbound}
    h(z)
    \le
    (1+\varepsilon)\log |z|
\end{equation}
for all sufficiently large $|z|$.

We next record an elementary estimate for the logarithmic kernel.

\begin{lemma}\label{LogKernelCircularL1Lemma}
There exists a universal constant $C \in \mathbb{R}$ such that, for every
$r\ge 1$ and every $\zeta\in\mathbb C$,
\begin{equation*}
    \frac{1}{2\pi}
    \int_0^{2\pi}
    \left|
        \log|re^{i\theta}-\zeta|-\log^+|\zeta|
    \right|
    \,d\theta
    \le
    C(1+\log r).
\end{equation*}
\end{lemma}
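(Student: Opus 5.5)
We split according to the size of $|\zeta|$ relative to $r$, and use only two facts: the exact circular mean of the logarithmic kernel, and a direct bound on its negative part.

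\emph{Step 1 (circular mean).} By Jensen's formula, or equivalently the mean value property of $\log|z-\zeta|$,
\begin{equation*}
\frac{1}{2\pi}\int_0^{2\pi}\log|re^{i\theta}-\zeta|\,d\theta=\log\max\{r,|\zeta|\}.
\end{equation*}

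\emph{Step 2 (negative part).} Write $f(\theta)=\log|re^{i\theta}-\zeta|$. Then $|f-c|\le |f-A|+|A-c|$ for any constant $A$, so it suffices to control $\frac{1}{2\pi}\int|f-A|\,d\theta$ for a convenient $A$ and then compare $A$ with $c=\log^+|\zeta|$. Take $A=\log\max\{r,|\zeta|\}$. Since $|re^{i\theta}-\zeta|\le r+|\zeta|\le 2\max\{r,|\zeta|\}$, we have $f-A\le\log 2$. Because Step 1 says $f-A$ has mean zero, the $L^1$ norm of $f-A$ is at most twice the integral of its positive part plus a constant, so
\begin{equation*}
\frac{1}{2\pi}\int_0^{2\pi}|f-A|\,d\theta\le 2\log 2.
\end{equation*}
This is uniform in $r$ and $\zeta$. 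Equivalently, we are bounding the negative part, which is finite because the logarithmic singularity is integrable. Concretely, $\int_0^{2\pi}\log^{-}\bigl(|re^{i\theta}-\zeta|/\max\{r,|\zeta|\}\bigr)\,d\theta$ is bounded by an absolute constant; the worst case $|\zeta|=r$ reduces to $\int\log^-|e^{i\theta}-1|\,d\theta<\infty$.

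\emph{Step 3 (comparison of constants).} It remains to bound $|A-\log^+|\zeta||=|\log\max\{r,|\zeta|\}-\log^+|\zeta||$.
\begin{itemize}
\item If $|\zeta|\ge r$ (so $|\zeta|\ge 1$), the difference is $0$.
\item If $1\le|\zeta|<r$, the difference is $\log(r/|\zeta|)\le\log r$.
\item If $|\zeta|<1$, the difference is $\log r$.
\end{itemize}
Combining with Step 2 gives the bound $2\log2+\log r\le C(1+\log r)$ for $r\ge1$, which is the claim.

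The only mildly delicate point is the uniformity in Step 2 near $|\zeta|=r$, where the kernel has its logarithmic singularity on the circle. The mean-zero trick, together with the one-sided bound $f-A\le\log2$, disposes of it without computing the singular integral explicitly.
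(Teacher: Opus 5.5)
Your proof is correct and is essentially the paper's argument. Your $f-A$ is exactly the paper's $k_{\zeta/r}$, since $\log r+\log^+|\zeta/r|=\log\max\{r,|\zeta|\}$. It is controlled the same way: Jensen's mean-zero identity, the one-sided bound $f-A\le\log 2$, and then the comparison $0\le\log\max\{r,|\zeta|\}-\log^+|\zeta|\le\log r$. One small wording point: for a mean-zero function the $L^1$ norm is exactly twice the integral of its positive part, so no extra constant is needed.
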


\begin{proof}
Set \(a=\zeta/r\) and
\[
    k_a(\theta)
    :=
    \log|e^{i\theta}-a|-\log^+|a|.
\]
By Jensen's formula,
\[
    \frac{1}{2\pi}\int_0^{2\pi}k_a(\theta)\,d\theta=0.
\]
Moreover, \(k_a\le \log 2\): this follows from
\[
    |e^{i\theta}-a|\le 2
\]
when \(|a|\le 1\), while for \(|a|>1\),
\[
    k_a(\theta)
    =
    \log\left|1-\frac{e^{i\theta}}{a}\right|
    \le \log 2.
\]
Consequently,
\[
    \frac{1}{2\pi}\int_0^{2\pi}|k_a(\theta)|\,d\theta
    =
    \frac{1}{\pi}\int_0^{2\pi}(k_a(\theta))_+\,d\theta
    \le 2\log 2.
\]

Finally,
\[
    \log|re^{i\theta}-\zeta|-\log^+|\zeta|
    =
    k_a(\theta)
    +
    \log r+\log^+|a|-\log^+|\zeta|,
\]
and
\[
    0
    \le
    \log r+\log^+|a|-\log^+|\zeta|
    \le
    \log r.
\]
The result follows.
\end{proof}

Define the finite measure
\begin{equation*}
    d\mu(\zeta)
    :=
    \frac{1}{2\pi}
    K(\zeta)\,d\operatorname{vol}(\zeta)
\end{equation*}
and the logarithmic potential
\begin{equation}\label{RieszPotentialDefinition}
    P(z)
    :=
    \int_{\mathbb C}
    \left(
        \log|z-\zeta|-\log^+|\zeta|
    \right)
    \,d\mu(\zeta).
\end{equation}
This integral is well defined for every $z\in\mathbb C$. Indeed, on
bounded sets the measure $\mu$ has a smooth density and logarithmic
singularities are locally integrable. For $|\zeta|$ sufficiently large
relative to $|z|$, the integrand is bounded and tends to zero as $|\zeta|\to\infty$. Since
\begin{equation*}
    \Delta_{\mathbb{C}} \log|z-\zeta|
    =
    2\pi\delta_\zeta,
\end{equation*}
we have
\begin{equation*}
    \Delta_{\mathbb{C}} P
    =
    \Delta_{\mathbb{C}} h
\end{equation*}
in the sense of distributions. Therefore
\begin{equation*}
    q:=h-P
\end{equation*}
is harmonic on all of $\mathbb C$.

\begin{proposition}
\label{HarmonicRemainderConstantProp}
The entire harmonic function $q$ is constant.
\end{proposition}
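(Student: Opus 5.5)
The plan is to show that $q$ has at most logarithmic growth in the $L^1$ sense on circles. A harmonic function with that growth is constant. Concretely, we control the positive part of $q$ on circles $|z|=r$ and use the mean value property to control the negative part as well.

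\textbf{Upper bound on $q$.} Write $q = h - P$ on the circle $|z|=r$ with $r\ge 1$. By~\eqref{hlogupperbound}, for every $\varepsilon>0$ and all large $r$,
\begin{equation*}
    h(re^{i\theta})\le (1+\varepsilon)\log r .
\end{equation*}
For $P$, Tonelli's theorem together with Lemma~\ref{LogKernelCircularL1Lemma} gives
\begin{equation*}
    \frac{1}{2\pi}\int_0^{2\pi}|P(re^{i\theta})|\,d\theta
    \le \int_{\mathbb C}\frac{1}{2\pi}\int_0^{2\pi}\bigl|\log|re^{i\theta}-\zeta|-\log^+|\zeta|\bigr|\,d\theta\,d\mu(\zeta)
    \le C\mu(\mathbb C)(1+\log r),
\end{equation*}
and $\mu(\mathbb C)=1-\alpha\le 1$ by finite total curvature. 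Consequently
\begin{equation*}
    \frac{1}{2\pi}\int_0^{2\pi}q^+(re^{i\theta})\,d\theta\le C'(1+\log r).
\end{equation*}

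\textbf{Two-sided $L^1$ bound.} By the mean value property, $\frac{1}{2\pi}\int_0^{2\pi} q(re^{i\theta})\,d\theta=q(0)$. Since $|q| = 2q^+ - q$, it follows that
\begin{equation*}
    \frac{1}{2\pi}\int_0^{2\pi}|q(re^{i\theta})|\,d\theta\le 2C'(1+\log r)+|q(0)|.
\end{equation*}

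\textbf{Liouville step.} Expand $q=\Re\bigl(\sum_{n\ge0} c_n z^n\bigr)$. The Fourier coefficients of $q(re^{i\theta})$ are bounded by its circle $L^1$ mean, so for $n\ge1$
\begin{equation*}
    |c_n| r^n\le 2\cdot\frac{1}{2\pi}\int_0^{2\pi}|q(re^{i\theta})|\,d\theta=O(\log r).
\end{equation*}
Letting $r\to\infty$ gives $c_n=0$ for all $n\ge 1$, so $q$ is constant.

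\textbf{Main obstacle.} The delicate point is justifying the Tonelli/Fubini interchange and the finiteness of $\mu(\mathbb C)$; both follow from $K\ge0$ and finite total curvature. One also has to ensure that~\eqref{hlogupperbound} holds pointwise on whole circles for large $r$, which it does as stated for all sufficiently large $|z|$. No lower bound on $h$ is needed, because the mean-value trick converts the one-sided bound into a two-sided one.
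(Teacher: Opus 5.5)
Your proposal is correct and follows the paper's proof essentially step for step. You use the same ingredients in the same order: the circle $L^1$ bound on $P$ from Lemma~\ref{LogKernelCircularL1Lemma} and Tonelli, the one-sided growth bound~\eqref{hlogupperbound} on $h$, the mean-value identity to upgrade the bound on $q^+$ to one on $|q|$, and Fourier coefficient estimates letting $r\to\infty$ to kill every nonconstant term.
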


\begin{proof}
By Lemma~\ref{LogKernelCircularL1Lemma} and Tonelli's theorem,
\begin{equation*}
    \frac{1}{2\pi}
    \int_0^{2\pi}|P(re^{i\theta})|\,d\theta
    \le
    \int_{\mathbb C}
    \frac{1}{2\pi}
    \int_0^{2\pi}
    \left|
        \log|re^{i\theta}-\zeta|-\log^+|\zeta|
    \right|
    \,d\theta\,d\mu(\zeta)
    \le
    C\mu(\mathbb C)(1+\log r)
\end{equation*}
for every $r\ge 1$.

By~\eqref{hlogupperbound},
\begin{equation*}
    h(re^{i\theta})
    \le
    C(1+\log r).
\end{equation*}
Since $q=h-P$, we have
\begin{equation*}
    q^+
    \le
    h^++P^-,
\end{equation*}
and consequently
\begin{equation}\label{qPositiveCircularGrowth}
    \frac{1}{2\pi}
    \int_0^{2\pi}
    q^+(re^{i\theta})\,d\theta
    \le
    C(1+\log r).
\end{equation}
Because $q$ is harmonic, the mean-value property gives
\begin{equation*}
    \frac{1}{2\pi}
    \int_0^{2\pi}
    q(re^{i\theta})\,d\theta
    =
    q(0).
\end{equation*}
It follows from~\eqref{qPositiveCircularGrowth} that
\begin{equation}\label{qCircularL1Growth}
    \frac{1}{2\pi}
    \int_0^{2\pi}
    |q(re^{i\theta})|\,d\theta
    \le
    C(1+\log r).
\end{equation}

Since $\mathbb C$ is simply connected, $q$ admits a global harmonic
conjugate and is the real part of an entire holomorphic function.
Thus
\begin{equation*}
    q(re^{i\theta})
    =
    a_0+
    \sum_{n=1}^{\infty}
    r^n
    \bigl(
        a_n\cos(n\theta)+b_n\sin(n\theta)
    \bigr),
\end{equation*}
where the series converges uniformly on every compact disk. For
$n\ge 1$, orthogonality gives
\begin{equation*}
    a_nr^n
    =
    \frac{1}{\pi}
    \int_0^{2\pi}
    q(re^{i\theta})\cos(n\theta)\,d\theta, \qquad
    b_nr^n
    =
    \frac{1}{\pi}
    \int_0^{2\pi}
    q(re^{i\theta})\sin(n\theta)\,d\theta.
\end{equation*}
Therefore, by~\eqref{qCircularL1Growth},
\begin{equation*}
    |a_n|+|b_n|
    \le
    C\frac{1+\log r}{r^n}.
\end{equation*}
Letting $r\to\infty$, we obtain
\begin{equation*}
    a_n=b_n=0
\end{equation*}
for every $n\ge 1$. Hence $q\equiv a_0$ is constant.
\end{proof}

We have therefore proved that
\begin{equation}\label{hRieszRep}
    h(z)
    =
    \frac{1}{2\pi}
    \int_{\mathbb C}
    \left(
        \log|z-\zeta|-\log^+|\zeta|
    \right)
    K(\zeta)\,d\operatorname{vol}(\zeta),
\end{equation} up to an additive constant, which we may take to be zero.

\subsection{Proof of almost-monotonicity} We restate and prove Theorem~\ref{mainthm2DmanifoldsChapter1}.

\begin{theorem}\label{mainthm2Dmanifolds} Let $M$ be a manifold satisfying Assumption~\ref{manifoldAssumption} and let $\alpha = 1 - \frac{1}{2\pi}\int_{M} K \,d\operatorname{vol}$ be as in~\eqref{alphadef}. Let $u_1, u_2 \in H^1_{\text{loc}}(M)$ be continuous, nonnegative, and subharmonic functions on $M$, with vanishing product. Let $p \in M$ with $u_1(p) = u_2(p) = 0$, and let $G$ be the conformal Green's function with pole at $p$. Then, the function $\Phi$ \eqref{Phimflddef} satisfies\begin{align*}
    \int_{\tau}^{T} \frac{\Phi'(r)}{\Phi(r)} \,dr &\ge -C + 2\log \alpha,
\end{align*} uniformly in $0 < \tau < T$, for an absolute constant $C \in \mathbb{R}.$
\begin{proof} We begin our argument as in the proofs of Proposition~\ref{planaralmostmonPoleBoundary} and Proposition~\ref{planaralmostmonPoleInterior}. If we define the eigenvalues $\lambda_i(r)$ as the largest constant such that \begin{align}\label{lambdaeigOmegar}
    \int_{\partial \Omega_r} |\nabla_T u|^2 \frac{1}{|\nabla R|} \,d\sigma &\ge \lambda_i(r) \int_{\partial \Omega_r} u^2 |\nabla R| \,d\sigma
\end{align} for all $u \in H_0^1(\Gamma_i(r))$, where $\Gamma_i(r)$ is the support of $u_i$ in $\partial \Omega_r$, then\begin{align}\begin{split}\label{ACFwithtchoiceMfld}
    \frac{\Phi'(r)}{\Phi(r)} &\ge 2(\sqrt{\lambda_1(r)} + \sqrt{\lambda_2(r)}) - \frac{2|\Omega_r|'}{|\Omega_r|}.
\end{split}
\end{align} We now simplify the expressions that defined our eigenvalue $\lambda_i(r)$ in~\eqref{lambdaeigOmegar}. We have \begin{align*}
    \int_{\partial \Omega_r} |\nabla_T u|^2 \frac{1}{|\nabla R|} \,d\sigma &= \int_{0}^{2\pi} \left|\left\langle  du, \frac{1}{|\partial_{\theta} x|} \,d\theta \right\rangle\right|^2 \frac{1}{|\nabla R|} |\partial_{\theta} x| \,d\theta.
\end{align*} From~\eqref{GthetaCoords}, $|\partial_{\theta} x| = \frac{1}{|\nabla G|}$. Therefore, \begin{align*}
    \int_{\partial \Omega_r} |\nabla_T u|^2 \frac{1}{|\nabla R|} \,d\sigma &= \frac{1}{r}\int_{0}^{2\pi} (\partial_{\theta} u)^2 \,d\theta.
\end{align*} Similarly, \begin{equation*}
    \int_{\partial \Omega_r} u^2 |\nabla R| \,d\sigma = \int_{0}^{2\pi} u^2 |\nabla R| |\partial_{\theta} x| \,d\theta = r\int_{0}^{2\pi} u^2 \,d\theta.
\end{equation*} It follows that $\lambda_i(r)$ is the largest constant such that \begin{align*}
    \int_{0}^{2\pi} (\partial_{\theta} u)^2 \,d\theta &\ge r^2 \lambda_i(r) \int_{0}^{2\pi} u^2 \,d\theta
\end{align*} for all functions $u = u(\theta) \in H^1([0, 2\pi])$ supported in the support of $u_i(r, \cdot)$, where we use $(r, \theta)$ coordinates. We now have unweighted Wirtinger inequalities as in the Friedland-Hayman inequality. The optimal partition is when $u_1, u_2$ are sine functions supported on $[0, \pi], [\pi, 2\pi]$ respectively, and it follows that \begin{align*}
    \sqrt{\lambda_1(r)} + \sqrt{\lambda_2(r)} &\ge \frac{2}{r}.
\end{align*} From~\eqref{ACFwithtchoiceMfld}, we obtain \begin{equation}\label{mfldestimateafterACF}
    \frac{\Phi'(r)}{\Phi(r)} \ge \frac{4}{r} - \frac{2|\Omega_r|'}{|\Omega_r|}.
\end{equation}Let us write $|\Omega_r|$ in $(r, \theta)$ coordinates. From~\eqref{GthetaCoords}, we have \begin{align*}
    |\Omega_r| &= \int_{0}^{r}\int_{0}^{2\pi} t |\nabla R|^{-2}(t, \theta) \,d\theta \,dt.
\end{align*} Let $h(r) = \int_{0}^{2\pi}|\nabla R|^{-2}(r, \theta) \,d\theta,$ so that $|\Omega_r| = \int_{0}^{r} t h(t) \,dt.$ Recall~\eqref{hRieszRep}, \begin{align*}
    |\nabla R|(r, \theta) &= \exp\left(\int_{\mathbb{C}} (\log |z-\zeta| - \log^{+}|\zeta|) \,d\mu(\zeta) \right),
\end{align*} where $\,d\mu(\zeta) = \frac{1}{2\pi}K \,d\mu(\zeta)$. This may be compared to the Schwarz-Christoffel formula~\eqref{SchwarzChristoffelMeasure} and its role in the proof of Theorem~\ref{planaralmostmonPoleBoundaryChapter1}. Note that here we have a measure on $\mathbb{C}$ rather than $\mathbb{R}$. Arguing as in Proposition~\ref{planaralmostmonPoleBoundary}, we may reduce to the case when $\mu$ is a finite Borel measure assigning no mass to a neighborhood of the origin and to the case $\tau = 0^+$. By~\eqref{mfldestimateafterACF}, we have \begin{equation*}
    \int_{0^+}^{T}\frac{\Phi'(r)}{2\Phi(r)} \,dr \le \int_{0^+}^{T} \left(\frac{|\Omega_r|'}{|\Omega_r|} - \frac{2}{r} \right)\,dr = \log\left(\frac{|\Omega_T|}{T^2}\right) - \lim_{\tau \to 0} \log\left(\frac{|\Omega_{\tau}|}{\tau^2}\right).
\end{equation*} We have \begin{equation*}
    \lim_{\tau \to 0}\frac{|\Omega_{\tau}|}{\tau^2} = \frac{1}{2}h(0) = \pi \exp\left(-2\int_{\mathbb{C}}\left( \log |\zeta| - \log^+ |\zeta| \right) \,d\mu(\zeta) \right).
\end{equation*} Recall that $\mu$ assigns no mass to a neighborhood of the origin. Without loss of generality, we assume that $\mu\left({B_1}\right) = 0$, so that $\int_{\mathbb{C}}\left(\log |\zeta| - \log^{+}|\zeta|\right) \,d\mu = 0$ and \begin{align*}
    \lim_{\tau \to 0} \frac{|\Omega_{\tau}|}{\tau^2} &= \pi.
\end{align*} Therefore, it suffices to prove \begin{align*}
    \frac{|\Omega_T|}{T^2} &\lesssim \frac{1}{\alpha}.
\end{align*} As in the proof of Proposition~\ref{planaralmostmonPoleBoundary}, it suffices to prove \begin{align}\label{inthT2Tmfld}
    \int_{I} h(r) \,dr &\lesssim \frac{T}{\alpha}
\end{align} for all intervals $I = [T, 2T]$. Recalling that $\mu({B_1}) = 0$, we write \begin{align*}
    \int_{I} h(r) \,dr &= \int_{T}^{2T} \int_{0}^{2\pi} \exp\left(2\int_{\mathbb{C}} \log \left|\frac{\zeta}{re^{i\theta} - \zeta}\right| \,d\mu(\zeta) \right) \,d\theta \,dr.
\end{align*} Let us write $\zeta = t e^{i\xi}$ in polar coordinates, with $t > 0$ and $\xi \in [0, 2\pi).$ If $|\zeta| \notin [T/2, 4T]$, \begin{align}\label{ratiosizewithzeta}
    \left|\frac{\zeta}{re^{i\theta} - \zeta}\right| &\lesssim 1.
\end{align} Let $J = [T/2, 4T]$ and $\mu_{J}$ denote the restriction of $\mu$ to the annulus $\{\zeta \in \mathbb{C}: |\zeta| \in J\}$. Note that \begin{align*}
    \left|re^{i\theta} - \zeta\right|^2 &= r^2 - 2rt \cos(\theta - \xi) + t^2.
\end{align*} If $\cos(\theta - \xi)<0,$ \begin{align*}
    r^2 - 2rt \cos(\theta - \xi) + t^2 &\ge t^2.
\end{align*} Therefore, given a fixed $\theta$ and $\cos(\theta - \xi) < 0$, \begin{align*}
    \left|\frac{\zeta}{re^{i\theta} - \zeta}\right| &\le 1,
\end{align*} and no positive contribution to the $\zeta$-integral is made for such $\zeta$.  

When $\cos(\theta - \xi) \ge 0$, then $\theta - \xi \in \left[-\frac{\pi}{2}, \frac{\pi}{2}\right] + 2\pi \mathbb{Z}.$ Let $\tilde{\xi}$ be chosen so that $\xi - \tilde{\xi} \in 2\pi \mathbb{Z}$ and $\theta - \tilde{\xi} \in \left[-\frac{\pi}{2}, \frac{\pi}{2}\right]$. Then, either $\tilde{\xi} = \xi$ or $\tilde{n} = \xi + 2\pi$. On this range, we have the elementary inequality $\cos (\theta - \tilde{\xi}) \le 1 - \frac{1}{4}\left(\theta - \tilde{\xi}\right)^2.$ By~\eqref{ratiosizewithzeta}, we may now write \begin{align*}
    &\int_{T}^{2T} \int_{0}^{2\pi} \exp\left(2\int_{\mathbb{C}} \log \left|\frac{\zeta}{re^{i\theta} - \zeta}\right| \,d\mu(\zeta) \right) \,d\theta \,dr \\ &\lesssim \int_{T}^{2T} \int_{0}^{2\pi} \exp\left( \int_{\mathbb{C}} \chi_{\{\cos(\theta - \xi) \ge 0 \}}(\zeta) \log \frac{t^2}{(r-t)^2 + \frac{1}{2}rt (\theta - \tilde{\xi})^2} \,d\mu_J(\zeta)\right) \,d\theta \,dr \\ &\le \int_{T}^{2T} \int_{0}^{2\pi} \exp\left( \int_{\mathbb{C}} \chi_{\{\cos(\theta - \xi) \ge 0 \}}(\zeta) \log K_{\zeta}(r, \theta)\,d\mu_J(\zeta)\right) \,d\theta \,dr,
\end{align*} where \begin{equation*}K_{\zeta}(r, \theta) := \frac{16T^2}{(r-t)^2 + \frac{1}{4}T^2 (\theta - \tilde{\xi})^2}. \qquad (\zeta = te^{i\xi})\end{equation*} Set $B := \mu_J(\mathbb{C}) \le \mu(\mathbb{C}) = 1 - \alpha.$ If $B = 0$, the estimate~\eqref{inthT2Tmfld} follows immediately, so assume $B > 0$. By Jensen's inequality, \begin{equation*}
    \exp\left(\int_{\mathbb{C}} \chi_{\{\cos(\theta - \xi) \ge 0\}}(\zeta) \log K_{\zeta}(r, \theta) \,d\mu_J(\zeta) \right) \le \int_{\mathbb{C}} K_{\zeta}(\theta, r)^{B \chi_{\{\cos(\theta - \xi) \ge 0\}}(\zeta)} \,\frac{d\mu_J(\zeta)}{B} \lesssim \int_{\mathbb{C}} K_{\zeta}(\theta, r)^{B} \,\frac{d\mu_J(\zeta)}{B},
\end{equation*} where the last estimate follows from $K_{\zeta}(\theta, r) \gtrsim 1$ for $r \in [T, 2T], \theta \in [0, 2\pi]$. Since $\tilde{\xi} \in \{\xi, \xi + 2\pi\}$, we have \begin{align*}
    \int_{I} h(r) \,dr &\lesssim \int_{T}^{2T} \int_{0}^{2\pi} \int_{\mathbb{C}}\left(\frac{16T^2}{(r-t)^2 + \frac{1}{4}T^2 (\theta - {\xi})^2} \right)^B \,\frac{d\mu_J(\zeta)}{B} \,d\theta \,dr \\ &+ \int_{T}^{2T} \int_{0}^{2\pi} \int_{\mathbb{C}} \left(\frac{16T^2}{(r-t)^2 + \frac{1}{4}T^2 (\theta - (\xi+2\pi))^2}\right)^B  \,\frac{d\mu_J(\zeta)}{B} \,d\theta \,dr.
\end{align*} It suffices to estimate the first integral. By Tonelli's theorem, \begin{align*}
    \int_{T}^{2T} \int_{0}^{2\pi} \int_{\mathbb{C}} \left(\frac{16T^2}{(r-t)^2 + \frac{1}{4}T^2 (\theta - {\xi})^2} \right)^B \,\frac{d\mu_J(\zeta)}{B} \,d\theta \,dr &= \int_{\mathbb{C}} \int_{T}^{2T} \int_{0}^{2\pi}\left(\frac{16T^2}{(r-t)^2 + \frac{1}{4}T^2 (\theta - {\xi})^2} \right)^B\,d\theta \,dr \,\frac{d\mu_J(\zeta)}{B}.
\end{align*} It therefore suffices to prove the estimate \begin{align*}
    \int_{T}^{2T} \int_{0}^{2\pi}\left(\frac{16T^2}{(r-t)^2 + \frac{1}{4}T^2 (\theta - {\xi})^2} \right)^B\,d\theta \,dr &\lesssim \frac{T}{1-B}
\end{align*} uniformly in $|\zeta| \in [T/2, 4T]$. This follows as with the corresponding estimate~\eqref{doubleintKProp4} in Proposition~\ref{planaralmostmonPoleBoundary}.

\end{proof}
\end{theorem}

\subsection{Fiala-Huber inequality}

In this section, we make all the assumptions of Theorem~\ref{mainthm2Dmanifolds}. We show how a different proof of almost-monotonicity for $\Phi$ in this manifold setting can be obtained using an isoperimetric inequality of Fiala \cite{Fiala}, who proved that if an analytic curve $C$ on a 2-dimensional analytic surface $M$ encloses a simply-connected domain $D$ of area $A$, and $M$ has nonnegative Gaussian curvature $K$, then \begin{align*}
    L^2 &\ge 4\pi A \left(1 - \frac{1}{2\pi} \int_{D} K \,d\operatorname{vol}\right).
\end{align*} Fiala's inequality was reproved and extended by Huber \cite{HuberIso}, who used conformal methods more in the spirit of this work. 

Since $G$ has no critical points by Proposition~\ref{GthetaCoordsProp}, $\partial \Omega_{r}$ is analytic for all $r > 0$. On the other hand, $F: M \to \mathbb{C}, (u, v) \to u + iv$ is a diffeomorphism by Proposition~\ref{GthetaCoordsProp}. Therefore, since the ball is simply connected in $\mathbb{C}$, $\Omega_{r}$ is simply connected in $M$. Using the Fiala-Huber inequality, we will prove the following.
\begin{theorem}\label{almostmonotFiala} Under the assumptions of Theorem~\ref{mainthm2Dmanifolds},
\begin{align*}
    \int_{0^+}^{T} \frac{\Phi'(r)}{\Phi(r)} \,dr &\ge -C + 6 \log \alpha
\end{align*} for an absolute constant $C \in \mathbb{R}$.
\end{theorem}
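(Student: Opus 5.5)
Starting point: the pointwise inequality \eqref{mfldestimateafterACF}, $\frac{\Phi'(r)}{\Phi(r)} \ge \frac{4}{r} - \frac{2|\Omega_r|'}{|\Omega_r|}$, obtained in the proof of Theorem~\ref{mainthm2Dmanifolds} from the Wirtinger estimate and valid for every $r>0$. Integrating from $0^+$ to $T$ and using $\lim_{\tau\to0}|\Omega_\tau|/\tau^2 = \frac12 h(0) = \pi|\nabla R|^{-2}(p)$, it suffices to prove
\begin{align*}
    \frac{|\Omega_T|}{T^2} \lesssim \alpha^{-3}\, |\nabla R|^{-2}(p),
\end{align*}
or, after normalizing $|\nabla R|(p)=1$ (rescaling $F$), $|\Omega_T| \lesssim \alpha^{-3}T^2$. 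The goal is an ODE inequality for $A(r):=|\Omega_r|$ derived from Fiala--Huber, used in place of the Riesz-decomposition estimates.

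\textbf{Step 1: length in terms of $A'$.} In the $(r,\theta)$ coordinates of Proposition~\ref{GthetaCoordsProp}, $|\partial_\theta x| = |\nabla G|^{-1} = r|\nabla R|^{-1}$. Hence
\begin{align*}
    L(r) = \int_0^{2\pi} r|\nabla R|^{-1}\,d\theta, \qquad A'(r) = r\int_0^{2\pi}|\nabla R|^{-2}\,d\theta .
\end{align*}
Cauchy--Schwarz gives $L(r)^2 \le 2\pi r A'(r)$.

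\textbf{Step 2: the isoperimetric inequality.} The sets $\Omega_r$ are simply connected with analytic boundary, and $\int_{\Omega_r}K \le 2\pi(1-\alpha)$. Fiala--Huber therefore yields $L^2 \ge 4\pi\alpha A$. Combining with Step 1,
\begin{align*}
    \frac{A'(r)}{A(r)} \ge \frac{2\alpha}{r}.
\end{align*}
This is a lower bound on the growth of $A$, whereas an upper bound is required. So this inequality should not be integrated directly. Instead, I plan to use it to control $\frac{4}{r}-\frac{2A'}{A}$ in the form $\frac{2}{r}\bigl(2-\frac{rA'}{A}\bigr)$, combined with the reverse information, which I expect to be the main obstacle.

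\textbf{Step 3: the reverse bound (main obstacle).} I would obtain an upper bound on $A'/A$ on average from the subharmonicity of $h$: the function $\log\bigl(r^{-1}A'(r)\bigr) = \log\int|\nabla R|^{-2}d\theta$ is controlled by the circular mean of $-2\log|\nabla R|$ via Jensen in the reverse direction. This step is where the factors of $\alpha$ multiply. The first attempt: take the circular mean $m(r)$ of $\log|\nabla R|$, which is nondecreasing and convex in $\log r$ with slope $\mu(B_r)\le 1-\alpha$. The Fiala--Huber estimate controls the oscillation of $|\nabla R|^{-1}$ about $e^{-m}$ on each circle, since $L^2/(2\pi rA')$ is an efficiency ratio bounded below by $\frac{2\alpha A}{rA'}$. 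A second application, Hölder with $L^2\ge 4\pi\alpha A$ on dyadic shells, converts this into the bound $A(2T)\lesssim \alpha^{-1}\cdot\alpha^{-2}\,T^2 e^{-2m(T)} \le \alpha^{-3}T^2$, using $m\ge m(0)=0$.

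The bookkeeping giving exactly three powers of $\alpha$ (one from Fiala--Huber, two from converting the Cauchy--Schwarz slack into a pointwise bound on dyadic blocks) is the part I am least sure of. If the dyadic conversion loses more, I would instead integrate the Riccati-type inequality $\bigl(\log(A/r^{2\alpha})\bigr)'\ge0$ against $m(r)$ to close the argument.
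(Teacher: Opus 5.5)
There is a genuine gap in Step~3, and it is the only step that produces an upper bound. Your reduction to $|\Omega_T|\lesssim\alpha^{-3}T^2$ (after normalization) and the identities in Step~1 are fine. Step~3, however, is not an argument, and each mechanism you sketch points the wrong way.

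\begin{itemize}
\item Fiala--Huber gives $L^2\ge 4\pi\alpha A$. So the ``efficiency ratio'' bound $L^2/(2\pi rA')\ge 2\alpha A/(rA')$ controls the oscillation of $|\nabla R|^{-1}$ only if $rA'/A$ is already bounded above, which is the conclusion. The argument is circular.
\item Jensen applied to the circular mean $m(r)$ only gives $\frac{1}{2\pi}\int_0^{2\pi}|\nabla R|^{-2}\,d\theta\ge e^{-2m(r)}$, again a lower bound. There is no ``reverse'' Jensen here.
\item The fallback $\bigl(\log(A/r^{2\alpha})\bigr)'\ge 0$ is also a lower bound on growth.
\end{itemize}

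Worse, no uniform bound on $A'(r)/r=\int_0^{2\pi}|\nabla R|^{-2}\,d\theta$ on individual circles can hold. Once $\alpha<\frac12$, Riesz mass $B\ge\frac12$ can concentrate near a point $\zeta_0$ of the circle $|z|=r$. The integrand then behaves like $|\theta-\xi|^{-2B}$, which is not uniformly integrable. Only dyadic averages of $A'$ are controlled. Controlling those is exactly the two-dimensional kernel estimate in the proof of Theorem~\ref{mainthm2Dmanifolds}, which already yields $\alpha^{-1}$ and makes Fiala--Huber superfluous.

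The missing idea is to use Fiala--Huber in its natural direction,
\[
|\Omega_T|\le |\partial\Omega_T|^2/(4\pi\alpha),
\]
and to bound the \emph{perimeter} rather than $A'$.

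\begin{enumerate}
\item Reduce as in Proposition~\ref{planaralmostmonPoleBoundary} to $\mu(B_1)=0$.
\item The Riesz representation \eqref{hRieszRep} then gives $|\partial\Omega_T|=T\int_0^{2\pi}\exp\bigl(\int_{\mathbb C}\log|\zeta/(Te^{i\theta}-\zeta)|\,d\mu(\zeta)\bigr)\,d\theta$.
\item Let $B=\mu(\mathbb C)\le 1-\alpha$. Apply Jensen with respect to the probability measure $\mu/B$, then Tonelli.
\item This reduces everything to the estimate $\int_0^{2\pi}|(T/\zeta)e^{i\theta}-1|^{-B}\,d\theta\lesssim (1-B)^{-1}\le\alpha^{-1}$, uniformly in $\zeta\neq 0$ and $T$.
\end{enumerate}

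The last estimate holds because the singularity has exponent $B<1$, which is integrable in one dimension. By contrast, $A'$ carries the exponent $2B$.

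This gives $|\partial\Omega_T|\lesssim T/\alpha$, hence $|\Omega_T|\lesssim T^2/\alpha^3$. Two powers of $\alpha^{-1}$ come from squaring the length bound and one from Fiala--Huber, not from Cauchy--Schwarz slack on dyadic blocks.
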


The $6\log \alpha$ in Theorem~\ref{almostmonotFiala} is suboptimal by Theorem~\ref{mainthm2Dmanifolds}, but it is nevertheless interesting that a comparable monotonicity result can be proven via this alternative route. 

\begin{proof}[Proof of Theorem~\ref{almostmonotFiala}] Following the reduction in Proposition~\ref{planaralmostmonPoleBoundary}, we may assume that the Riesz measure $\mu = \frac{1}{2\pi}K \,d\operatorname{vol}(\zeta)$ assigns zero mass to a neighborhood of origin, and without loss of generality we may assume $\mu({B_1}) = 0$. As in the proof of Theorem~\ref{mainthm2Dmanifolds}, it follows that \begin{align*}
    \lim_{\tau \to 0} \frac{|\Omega_{\tau}|}{\tau^2} &= \pi,
\end{align*} and \begin{align*}
    \int_{0^+}^{T} \frac{\Phi'(r)}{\Phi(r)} \,dr &\le \log\left(\frac{|\Omega_T|}{\pi T^2}\right).
\end{align*} It therefore suffices to prove \begin{align}\label{areatoT2cubicalpha}
    \frac{|\Omega_T|}{T^2} &\lesssim \frac{1}{\alpha^3}.
\end{align} By the Fiala-Huber inequality, \begin{align*}
    |\Omega_T| &\le \frac{|\partial \Omega_T|^2}{4\pi \alpha}.
\end{align*} Therefore,~\eqref{areatoT2cubicalpha} follows from \begin{align}\label{perimeterboundFiala}
    |\partial \Omega_T| &\lesssim \frac{T}{\alpha}.
\end{align} This is the content of the following lemma, in view of the identity \begin{align*}
    |\partial \Omega_{T}| &= T \int_{0}^{2\pi} |\nabla R|^{-1}(T, \theta) \,d\theta,
\end{align*} which follows from the coarea formula. Here, $(T, \theta)$ are $(r, \theta)$ coordinates. \end{proof}

\begin{lemma}
Let $\mu$ be a measure with $\mu({B_1}) = 0$ and $\mu(\mathbb{C}) \le 1 - \alpha$. Then, \begin{align*}
    \int_{0}^{2\pi} \exp\left(\int_{\mathbb{C}} \log \left|\frac{\zeta}{Te^{i\theta} - \zeta} \right| \,d\mu(\zeta) \right) \,d\theta &\lesssim \frac{1}{\alpha}.
\end{align*} uniformly in $T \ge 0$.
\begin{proof}
Let $B := \mu(\mathbb{C}),$ which we may without loss of generality assume to be positive. By Jensen's inequality, \begin{align*}
    \exp\left(\int_{\mathbb{C}} \log \left|\frac{\zeta}{Te^{i\theta} - \zeta} \right| \,d\mu(\zeta) \right) &\le \int_{\mathbb{C}} \left|\frac{T}{\zeta} e^{i\theta} - 1\right|^{-B} \,\frac{d\mu(\zeta)}{B}.
\end{align*} By Tonelli's theorem, it suffices to prove the estimate \begin{align*}\label{FialaOneFactorBound}
    \int_{0}^{2\pi}\left| \frac{T}{\zeta}e^{i\theta} - 1\right|^{-B} \,d\theta &\lesssim \frac{1}{1 - B}
\end{align*} uniformly in $\zeta \in \mathbb{C}\setminus \{0\}$ and $T \ge 0$. Note that by shifting the $\theta$ variable, we may assume $\zeta$ is real and positive. Therefore, equivalently, we may prove \begin{align*}
    \int_{-\pi}^{\pi}\left|e^{i\theta} - s\right|^{-B} \,d\theta &\lesssim \frac{1}{1-B} s^{-B},
\end{align*} uniformly for $s > 0.$ If $s \le \frac{1}{2}$, the estimate follow from  $\left|e^{i\theta} - s\right| \ge \frac{1}{2}$. Therefore, we may consider $s \ge \frac{1}{2}.$ Note that $1 - \cos \theta \ge \frac{1}{4}\theta^2$ on $[-\pi, \pi]$. Therefore, we have \begin{equation*}
    \left|e^{i\theta} - s\right|^2 = (1 - s)^2 + 2s(1 - \cos \theta) \ge \frac{1}{4} \theta^2.
\end{equation*} Therefore, we may write \begin{equation*}
    \left|e^{i\theta} - s\right|^2 \ge (1 - s)^2 + \frac{2}{\pi^2}\theta^2 \ge \frac{2}{\pi^2} \theta^2
\end{equation*}It follows that \begin{equation*}
    \int_{-\pi}^{\pi} \left|e^{i\theta} - s\right|^{-B} \,d\theta \le \int_{-\pi}^{\pi} \left(\frac{\theta}{2}\right)^{-B} \,d\theta \lesssim \frac{1}{1 - B}  \lesssim \frac{1}{1 - B}s^{-B},
\end{equation*} since $s \ge \frac{1}{2}$. This completes the proof of the lemma, of~\eqref{perimeterboundFiala}, and of Theorem~\ref{almostmonotFiala}.
\end{proof}
\end{lemma}

\section{Applications to the regularity theory for two-phase free boundary problems}

\subsection{Quasisymmetry} We recall the following definition.

\begin{definition}
If $(X,d_X)$ and $(Y,d_Y)$ are metric spaces, a homeomorphism
$f:X\to Y$ is called $\eta$-quasisymmetric for an increasing homeomorphism $\eta:[0,\infty)\to[0,\infty)$ if
\begin{equation*}
    \frac{d_Y(f(x),f(z))}{d_Y(f(y),f(z))}
    \le
    \eta\left(\frac{d_X(x,z)}{d_X(y,z)}\right)
\end{equation*}
for every triple of distinct points $x,y,z\in X$.    
\end{definition}

 Given $z \in \mathbb{C}$, we consider the map \begin{align*}
    F^{-1} = Q: \mathbb{C} \to M, \qquad z = re^{i\theta} \to (r, \theta) \in M,
\end{align*} where we use $(R, \theta)$ coordinates, so that $R((r, \theta)) = r.$ 

\begin{theorem}\label{mfldquasisymmetry} There exists an increasing homeomorphism $\eta_{\alpha}: [0, \infty) \to [0,\infty)$ depending only on $\alpha$, so that $Q$ is $\eta_{\alpha}-$quasisymmetric.
\begin{proof}
Since $K \ge 0$, the Bishop-Gromov theorem implies that \begin{align*}
    t \to \frac{\operatorname{vol}(B_t(p))}{\pi t^2}
\end{align*} is decreasing. Since $\lim_{t \to 0}\frac{\operatorname{vol}(B_t(p))}{\pi t^2} = 1$ and, by a result of Shiohama \cite{Shiohama}, $\lim_{t\to \infty} \frac{\operatorname{vol}(B_t(p))}{\pi t^2} = \alpha,$ we have \begin{equation}\label{geodesicballvolumebounds}
    \alpha \pi t^2 \le \operatorname{vol}(B_t(p)) \le \pi t^2
\end{equation} for all $t \ge 0.$ Therefore, $M$ is Ahlfors 2-regular. We now show that $M$ is linearly locally connected or LLC. Recall that to be LLC is to be $\lambda-$LLC for some $\lambda \ge 1.$ For this, two conditions must be satisfied. 
\begin{enumerate}
    \item ($\lambda-\text{LLC}_1$) For each pair of distinct points $x, y \in B_t(a)$, there is a continuum (a compact, connected set) $E \subset B_{\lambda t}(a)$ such that $x, y \in E.$
    \item ($\lambda-\text{LLC}_2$) For each pair of distinct points $x, y \in M\setminus B_t(a),$ there is a continuum $E \subset M\setminus B_{t/\lambda}(a)$ such that $x, y \in E$.
\end{enumerate}

We prove that $M$ is $1-\text{LLC}_1$. Indeed, since $M$ is complete, for $x, y\in B_t(a)$, we can choose minimizing geodesics from $x$ to $a$ and from $a$ to $y$. Joining these two paths gives a continuous path contained in $B_t(a)$ joining $x$ and $y$.

For the second condition, by rescaling, it is equivalent to prove that there exists $\lambda = \lambda(\alpha)$ such that for every $a \in M, t > 0$, any two points of $M\setminus B_{\lambda t}(a)$ can be joined by a continuum contained in $M\setminus B_{t}(a).$ Suppose the claim is false. Then for any $j\to \infty,$ there exists $a_j \in M, t_j > 0$ and two points $x_j, y_j \in M\setminus B_{jt_j}(a_j)$ which cannot be joined inside $M\setminus B_{t_j}(a_j).$ Equivalently, $x_j, y_j$ lie in two different components of $M\setminus B_{t_j}(a_j)$, and each component reaches distance at least $jt_j$ from $a_j$. Now rescale the metric by $t_j$, that is \begin{align*}
    d_j := t_j^{-1} d_g,
\end{align*} where $d_g$ denotes the original metric on $M$. Then, $(M, d_j, a_j)$ is again a complete, pointed space with nonnegative Gaussian curvature. We use pointed notation, so that $a_j$ is the basepoint around which we take the limit. In the metric $d_j$, the ball $B_{t_j}(a_j) = B_{d_g}(a_j, t_j)$ becomes $B_{d_j}(a_j, 1).$ Then, $x_j, y_j \in M\setminus B_{d_j}(a_j, j)$ but cannot be joined outside $B_{d_j}(a_j, 1).$ The curvature condition is preserved in the relevant sense, so that each $(M, d_j)$ is a complete Alexandrov space with curvature bounded below by $0$. For any ball $B_{d_j}(a, t)$ in $(M, d_j, a_j)$, the scale invariant volume bound~\eqref{geodesicballvolumebounds} implies a finite covering number by balls of radius $\varepsilon > 0$ independent of $j$. By Theorem 8.1.10 in Burago, Burago, and Ivanov \cite{BBIMetricGeo}, the class $(M, d_j, a_j)$ is precompact, so that, up to a subsequence, we may assume that \begin{align*}
    (M, d_j, a_j) \to (X, d_{\infty}, a_{\infty})
\end{align*} in Gromov-Hausdorff sense. Moreover, by Proposition 10.7.1 in the same text, $(X, d_{\infty}, a_{\infty})$ also has nonnegative curvature. 

The limit space $X$ has at least two unbounded components outside a fixed compact set. Therefore, since $X$ is complete, it contains a line. By the Toponogov splitting theorem, \begin{equation*}
    X \simeq \mathbb{R} \times Y,
\end{equation*} where $Y$ is also a nonnegatively curved Alexandrov space. Since $X$ is 2-dimensional, $Y$ is one dimensional. Note that $Y$ must be compact, otherwise $X$ would have only one end. Hence, for large $R$, balls of radius $R$ in $X$ have at most linear growth. This contradicts the quadratic growth that is inherited from~\eqref{geodesicballvolumebounds}. Therefore, for some $\lambda(\alpha)>0$, $M$ is $\lambda(\alpha)-\text{LLC}_{2}.$

We now have that $M$ is a complete, Ahlfors 2-regular, LLC space, which by Lemma~\ref{MdiffeoR2}, is additionally diffeomorphic to the plane. Therefore, by a theorem of Wildrick \cite{Wildrick}, $M$ is quasisymmetrically equivalent to $\mathbb{C}$, meaning that there exists a bijective homeomorphism \begin{align*}
    h: \mathbb{C} \to M,
\end{align*} which is ${\eta}$-quasisymmetric, for some increasing homeomorphism ${\eta}$ depending on the LLC constants, which here depend only on $\alpha$. Since $F$ is conformal, the inverse map $Q: \mathbb{C} \to M$ is conformal. Therefore, the map \begin{align*}
    H := h^{-1} \circ Q : \mathbb{C} \to \mathbb{C}
\end{align*} has bounded distortion depending on ${\eta}$. Therefore, from Tukia and V\"ais\"al\"a \cite{TukiaVaisala}, $H$ is $\psi$-quasisymmetric, where $\psi$ depends on $\alpha$. Therefore, $Q$ is $\eta_{\alpha}$-quasisymmetric, with $\eta_{\alpha} := \eta \circ \psi$, concluding the proof.
\end{proof}
\end{theorem}

\subsection{Lipschitz bound}

In this section, we prove Theorem~\ref{mfldregularitythmChapter1}. We begin with the ACF linear growth bound.

\begin{lemma}\label{interiorACFgrowthM}
Let $M$ satisfy Assumption~\ref{manifoldAssumption}. There exist constants $L_*=L_*(\alpha)>1$ and $C_*=C_*(\alpha)<\infty$ with the following property: Let $U\subset M$ be open, and let $u$ be a local minimizer of
\begin{equation}
    J[v]
    =
    \int_U \left(|\nabla v|^2 + 1_{\{v > 0\}}\right) \,d\operatorname{vol}.
\end{equation}
Let $z\in U$, and let $p\in\{u=0\}$ satisfy $d
    :=
    d_g(z,p)
    =
    \operatorname{dist}_g(z,\{u=0\}).$
Assume that $B_{2L_* d}(p)\subset U.$
Then
\begin{align}
    |u(z)|
    &\leq
    C_*d
    +
    C_*
    \left[
    \left(
    \int_{ B_{L_* d}(p)}|\nabla u^+|^2\,\,d\operatorname{vol}
    \right)
    \left(
    \int_{B_{L_* d}(p))}|\nabla u^-|^2\,\,d\operatorname{vol}
    \right)
    \right]^{1/4}.
\end{align}
\end{lemma}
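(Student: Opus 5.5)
We follow the classical Alt--Caffarelli--Friedman linear growth argument, with the ACF functional replaced by the Green-sublevel functional $\Phi$ of \eqref{Phimflddef}. The monotonicity input is Theorem~\ref{mainthm2Dmanifolds}, and the geometric input is Theorem~\ref{mfldquasisymmetry}. We assume without loss of generality that $u(z)>0$; the case $u(z)<0$ is symmetric, using $u^-$. The ball $B_d(z)$ lies in $\{u>0\}$, so $u$ is harmonic there. We also take $p$ to be the closest zero, with $u(p)=0$.

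\textbf{Step 1: interior estimate for $u(z)$.} By Harnack's inequality on $B_d(z)$, $u$ is comparable to $u(z)$ on $B_{d/2}(z)$, with constants depending only on $\alpha$. This uses Ahlfors regularity \eqref{geodesicballvolumebounds} together with $K\ge 0$; the relevant Harnack and Poincar\'e constants depend only on volume doubling. Next we bound $u(z)$ by the energy near $p$. There is a point $p$ on $\partial B_d(z)$ with $u(p)=0$, and the nondegeneracy/linear-growth structure of minimizers gives
\[
u(z)\lesssim d + d\Bigl(\fint_{B_{Ld}(p)}|\nabla u^+|^2\Bigr)^{1/2}.
\]
If $u^-$ has small energy near $p$ relative to that of $u^+$, we instead compare with the one-phase problem. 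In that case the Alt--Caffarelli one-phase argument (harmonic replacement in a ball together with minimality) gives $u(z)\le C_* d$. This produces the $C_*d$ term.

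\textbf{Step 2: passing from balls to Green sublevel sets.} Let $G$ be the conformal Green's function with pole at $p$, and let $\Omega_r$ be its sublevel sets. By Theorem~\ref{mfldquasisymmetry}, $Q$ is $\eta_\alpha$-quasisymmetric, and $\Omega_r=Q(B_r)$ in the $F$-coordinates centred at $p$. Quasisymmetry then implies that there are radii with
\[
B_{s}(p)\subset \Omega_{r}\subset B_{H(\alpha)s}(p),
\]
where $H(\alpha)=\eta_\alpha(1)$. Combined with Ahlfors regularity, this gives $|\Omega_r|\asymp_\alpha s^2$. Choosing $r_1<r_2$ with $B_{d}(p)\subset\Omega_{r_1}$ and $\Omega_{r_2}\subset B_{L_*d}(p)$ (this fixes $L_*$ as a power of $H(\alpha)$), we apply Theorem~\ref{mainthm2Dmanifolds} to $u_1=u^+$ and $u_2=u^-$. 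These are continuous, nonnegative, subharmonic on $U$, have vanishing product, and vanish at $p$. Although the functions are defined only on $U$, the proof of Theorem~\ref{mainthm2Dmanifolds} is local in $r$ for $\Omega_r\subset U$. We obtain
\[
\Phi(r)\le C(\alpha)\,\Phi(r_2)\lesssim_\alpha d^{-4}\int_{B_{L_*d}(p)}|\nabla u^+|^2\int_{B_{L_*d}(p)}|\nabla u^-|^2
\]
for all $r\le r_2$.

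\textbf{Step 3: closing the estimate.} In the genuinely two-phase case of Step 1, the energy of each phase on $\Omega_{r_1}\supset B_d(p)$ controls $(u(z)/d)^2 d^2$, up to a $C_*d$ error. For $u^+$ this follows from Step 1 and Caccioppoli/Poincar\'e-type estimates. For $u^-$, we compare minimality with the competitor $\max(u,0)$ near $p$; otherwise we are in the one-phase case. Hence
\[
(u(z)/d)^4 d^4\lesssim d^4\Phi(r_1),
\]
and taking fourth roots gives the claim.

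The main obstacle is Step 1/Step 3: extracting a lower bound on both phase energies near $p$ in terms of $u(z)$. In ACF this relies on mean-value properties over Euclidean balls. Here we must run it on balls with only doubling/Harnack constants depending on $\alpha$, and match those balls to the sublevel sets $\Omega_r$ via the quasisymmetry comparison. I would try first to handle it through the dichotomy above (the one-phase case when the $u^-$ energy is small), using the mean value inequality for subharmonic functions on $(M,g)$ with constants depending only on volume doubling.
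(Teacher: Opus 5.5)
There is a genuine gap in your Steps 1 and 3, which carry all the content of this lemma (you flag them yourself as the main obstacle). Neither of the two estimates you need is justified.

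First, the Step 1 bound $u(z)\lesssim d+d\bigl(|B_{Ld}(p)|^{-1}\int_{B_{Ld}(p)}|\nabla u^+|^2\bigr)^{1/2}$ does not follow from nondegeneracy, which bounds $u^+$ from below, and ``linear growth'' is what is being proved. In dimension two a point has zero capacity, so Harnack together with $u^+(p)=0$ gives nothing. For example, in $\mathbb{R}^2$ the function $A\max\{0,1+\varepsilon\log(|x-p|/d)\}$ is continuous, nonnegative, subharmonic and vanishes at $p$. It is about $A$ on $B_{d/2}(z)$, yet its energy in $B_{Ld}(p)$ is only about $2\pi\varepsilon A^2$, so your right-hand side is $O(d+\sqrt{\varepsilon}A)$. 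A minimality input is needed, for instance one forcing $\{u\le 0\}$ to fill a fixed fraction of the ball.

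Second, the competitor $\max(u,0)$ cannot give a lower bound on $\int|\nabla u^-|^2$. It has the same positive set as $u$ and smaller Dirichlet energy. So comparing $u$ with $u^+-\eta u^-$ (cut off to restore boundary values) only bounds the $u^-$ energy from above. Your fallback dichotomy (small $u^-$ energy relative to $u^+$ energy puts you ``in the one-phase case'' with $u(z)\le C_*d$) is asserted, not proved, and it is exactly the two-phase estimate the lemma requires.

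The missing ingredient is the minimality-based part of the Alt--Caffarelli--Friedman interior argument: harmonic replacement at the free boundary point $p$, plus the density estimates. With $v$ harmonic in $B_r(p)$ and $v=u$ on $\partial B_r(p)$, minimality gives $\int_{B_r(p)}|\nabla(u-v)|^2\le|\{u\le0\}\cap B_r(p)|$. This is what forces a large $u(z)/d$ to be accompanied by a comparably large negative phase at scale $d$, the variational form of the jump condition. Poincar\'e then applies to $u^-$, which vanishes on $B_d(z)$, a fixed fraction of $B_{2d}(p)$ by \eqref{geodesicballvolumebounds}.

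Separately, your Step 2 is unnecessary and is not how the paper proceeds. The lemma is a single-scale statement about geodesic balls. Once $\Omega_{r_1}\subset\Omega_{r_2}\subset B_{L_*d}(p)$, integrals over $\Omega_{r_1}$ are trivially dominated by those over $B_{L_*d}(p)$, so Theorem~\ref{mainthm2Dmanifolds} and Theorem~\ref{mfldquasisymmetry} add nothing here. In the paper, the Green sublevel functional, almost-monotonicity and the sandwich \eqref{manifoldGreenSandwichRegularity} enter only afterwards, in the proof of Theorem~\ref{mfldregularitythmChapter1}, to bound the right-hand side of this lemma by $C(\alpha)E^{1/2}d$.

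The paper's proof of the lemma is simply ACF's interior argument run on geodesic balls; the only new point is uniformity of constants. $K\ge0$ gives $\operatorname{Ric}\ge0$, hence scale-invariant Poincar\'e, Harnack and gradient estimates with dimensional constants (Cheng--Yau, Li--Schoen). The Ahlfors bounds \eqref{geodesicballvolumebounds} then make the density, nondegeneracy and harmonic-replacement estimates depend only on $\alpha$.
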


\begin{proof}
We follow the interior argument in "Another Proof of Theorem~5.3" in Alt, Caffarelli, and Friedman \cite{ACF84}. The only point requiring comment is the uniformity of the constants on $M$. The Euclidean local estimates used in the ACF proof are replaced by their intrinsic analogues on geodesic balls. Indeed, $K\geq0$ gives $\operatorname{Ric}\geq0$, so, the scale-invariant local Poincar\'e inequality, Harnack inequality, and interior harmonic gradient estimates hold with dimensional constants. The Harnack inequality follows from Cheng-Yau, while the other two follow from work of Li and Schoen \cite{LiSchoen}. Caccioppoli estimates follow from the weak harmonic equation by the usual cutoff argument. Finally, the Ahlfors bounds~\eqref{geodesicballvolumebounds} replace the Euclidean volume estimates and make the density, nondegeneracy, and harmonic-replacement estimates quantitative with constants depending only on $\alpha$.

Thus the interior ACF growth argument applies on $M$, with Euclidean balls replaced by geodesic balls and with constants depending only on $\alpha$.
\end{proof}

\begin{proof}[Proof of Theorem~\ref{mfldregularitythmChapter1}]
Fix $L>1$. Choose $\Lambda=\Lambda(L,\alpha)>1$ sufficiently
large that $\eta_\alpha(\Lambda^{-1}) < \frac{1}{L},$
and set $\tilde{L} := \max\{L,\eta_\alpha(\Lambda)\}.$
We claim that if $x\in M$, $s=d(x,p)$, and $r=R(x)$, then
\begin{align}\label{manifoldGreenSandwichRegularity}
    B_{Ls}(p)
    \subset
   \Omega_{\Lambda r}
    \subset
    B_{\tilde{L}s}(p).
\end{align}
Indeed, let $y\in B_{Ls}(p)$ and suppose that
$R_p(y)\geq\Lambda r$. Quasisymmetry, applied with base point
$0$, gives
\begin{equation*}
    \frac{s}{d(y,p)}
    =
    \frac{d(Q(F(x)),Q(0))}
         {d(Q(F(y)),Q(0))}
    \leq
    \eta_\alpha
    \left(
        \frac{R(x)}{R(y)}
    \right)
    \leq
    \eta_\alpha(\Lambda^{-1})
    <
    \frac{1}{L},
\end{equation*}
contradicting $d(y,p)<Ls$. This proves the first inclusion.
If instead $y\in \Omega_{\Lambda r}$, then
\begin{equation*}
    \frac{d(y,p)}{s}
    \leq
    \eta_\alpha
    \left(
        \frac{R(y)}{R(x)}
    \right)
    <
    \eta_\alpha(\Lambda)
    \leq
    \tilde{L},
\end{equation*}
which proves the second inclusion.

By a covering argument and the natural rescaling of the
functional, it is enough to prove that for some
$\varrho=\varrho(\alpha)>0$,
\begin{align}\label{normalizedManifoldLipschitzEstimate}
    \sup_{B_\varrho(x_0)}
    |\nabla u|
    \leq
    C(\alpha)E^{1/2},
\end{align} where \begin{align*}
    E
    &:=
    \int_{B_1(x_0)}
    |\nabla u|^2\,d\operatorname{vol}.
\end{align*}

Choose $L = L_*(\alpha),$ where $L_*(\alpha)$ is as in Lemma~\ref{interiorACFgrowthM},
and let $\Lambda$ and $\tilde{L}$ be as in
\eqref{manifoldGreenSandwichRegularity}. Fix $s_0 :=({4\tilde{L}})^{-1}.$
We choose $\varrho=\varrho(\alpha)>0$ sufficiently small that,
whenever $z\in B_\varrho(x_0)$, $p\in B_{2\varrho}(x_0)$, and
$d_g(z,p)<\varrho$, one has
\begin{equation}\label{rhoChoicesManifoldRegularity}
    B_{\tilde{L}s_0}(p)
    \subset
    B_1(x_0), \qquad 
    \tilde{L}d(z,p)
    \leq
    Ls_0,
    \qquad
    B_{2Ld(z,p)}(p)
    \subset
    B_1(x_0).
\end{equation}

Let $z\in B_\varrho(x_0)$. If $d := \operatorname{dist}(z,\{u=0\}) \geq \varrho,$ then $u$ has a fixed sign and is harmonic in
$B_{\varrho/2}(z)$. Since $K\geq0$, the Bochner formula shows
that $|\nabla u|^2$ is subharmonic there. The mean-value
inequality of Li and Schoen \cite{LiSchoen} and the uniform lower volume bound~\eqref{geodesicballvolumebounds} therefore give
\begin{equation*}
    |\nabla u(z)|^2
    \leq
    \frac{C}
    {\operatorname{vol}(B_{\varrho/4}(z))}
    \int_{B_{\varrho/2}(z)}
    |\nabla u|^2\,d\operatorname{vol}
    \leq
    C(\alpha)E.
\end{equation*}
Thus it remains to consider the case $d < \varrho.$ Choose $p\in\{u=0\}$ with $d(z,p)=d$. Then
$p\in B_{2\varrho}(x_0)$. Set $r_z := R(z)$. Applying \eqref{manifoldGreenSandwichRegularity} at the scale
$d$ gives
\begin{align}\label{smallScaleManifoldSandwich}
    B_{Ld}(p)
    \subset
    \Omega_{\Lambda r_z}
    \subset
    B_{\tilde{L}d}(p).
\end{align}
Since $M$ is complete and noncompact, we may choose
$y_0\in M$ satisfying $d(y_0,p) = s_0.$ Writing $r_0:=R(y_0)$, a second application of
\eqref{manifoldGreenSandwichRegularity} gives
\begin{align}\label{outerScaleManifoldSandwich}
    B_{Ls_0}(p)
    \subset
    \Omega_{\Lambda r_0}
    \subset
    B_{\tilde{L}s_0}(p)
    \subset
    B_1(x_0).
\end{align}
By \eqref{rhoChoicesManifoldRegularity},
\begin{align*}
    \Omega_{\Lambda r_z}
    \subset
    B_{\tilde{L}d}(p)
    \subset
    B_{Ls_0}(p)
    \subset
    \Omega_{\Lambda r_0}.
\end{align*}
In particular, $r_z\leq r_0$. By the almost-monotonicity formula, Theorem~\ref{mainthm2DmanifoldsChapter1}, for the functional $\Phi$, we obtain
\begin{align}\label{PhiSmallOuterManifold}
    \Phi(\Lambda r_z)
    \leq
    C(\alpha)\Phi(\Lambda r_0).
\end{align} Recall, \begin{align*}
    \Phi(r) := \frac{1}{|\Omega_r|^2} 
    \int_{\Omega_r}|\nabla u^+|^2\,\,d\operatorname{vol}
    \int_{\Omega_r}|\nabla u^-|^2\,\,d\operatorname{vol}
\end{align*}

By \eqref{outerScaleManifoldSandwich} and~\eqref{geodesicballvolumebounds}, we have
\begin{align*}
    \operatorname{vol}(\Omega_{\Lambda r_0})
    &\geq
    \operatorname{vol}(B_{Ls_0}(p))
    \geq
    \pi\alpha L^2s_0^2.
\end{align*}
Moreover,
\begin{align*}
    \int_{\Omega_{\Lambda r_0}}
    |\nabla u^\pm|^2\,d\operatorname{vol}
    \leq
    E.
\end{align*}
It follows that
\begin{align*}
    \Phi(\Lambda r_0)
    \leq
    C(\alpha)E^2,
\end{align*}
and hence, by \eqref{PhiSmallOuterManifold},
\begin{align}\label{PhiSmallBoundManifold}
    \Phi(\Lambda r_z)
    \leq
    C(\alpha)E^2.
\end{align}

On the other hand, the second inclusion in
\eqref{smallScaleManifoldSandwich} and
\eqref{geodesicballvolumebounds} give
\begin{align*}
    \operatorname{vol}(\Omega_{\Lambda r_z})
    &\leq
    \operatorname{vol}(B_{\tilde{L}d}(p))
    \leq
    \pi \tilde{L}^2d^2.
\end{align*}
Combining this with \eqref{PhiSmallBoundManifold}, we obtain
\begin{equation*}
    \left(
        \int_{\Omega_{\Lambda r_z}}
        |\nabla u^+|^2\,d\operatorname{vol}
    \right)
    \left(
        \int_{\Omega_{\Lambda r_z}}
        |\nabla u^-|^2\,d\operatorname{vol}
    \right) \leq C(\alpha)E^2d^4.
\end{equation*}
The first inclusion in
\eqref{smallScaleManifoldSandwich} then yields
\begin{equation}\label{ballEnergyProductManifold}
    \left(
        \int_{B_{Ld}(p)}
        |\nabla u^+|^2\,d\operatorname{vol}
    \right)
    \left(
        \int_{B_{Ld}(p)}
        |\nabla u^-|^2\,d\operatorname{vol}
    \right)
    \leq
    C(\alpha)E^2d^4.
\end{equation}

The last condition in \eqref{rhoChoicesManifoldRegularity}
allows us to apply Lemma~\ref{interiorACFgrowthM}. Together
with \eqref{ballEnergyProductManifold}, it gives
\begin{equation}
\label{linearGrowthManifold}
    |u(z)| \leq
    C(\alpha)d
    +
    C(\alpha)
    \left[
    \left(
        \int_{B_{L_*d}(p)}
        |\nabla u^+|^2\,d\operatorname{vol}
    \right)
    \left(
        \int_{B_{L_*d}(p)}
        |\nabla u^-|^2\,d\operatorname{vol}
    \right)
    \right]^{1/4} \leq
    C(\alpha)d
    +
    C(\alpha)E^{1/2}d.
\end{equation} Now, since $p$ remains a fixed positive distance away from $\partial B_1(x_0)$, we may choose $s_1 = s_1(\alpha) > 0$ such that $B_{2s_1}(p) \subset B_1(x_0)$. A standard argument combining nondegeneracy, zero-phase density estimates with local boundedness, and the Poincaré inequality, yields
\[
\int_{B_{s_1}(p)}|\nabla u^+|^2\,d\operatorname{vol}
\ge c(\alpha)s_1^2.
\]
Consequently, $E\ge c(\alpha)>0$, so that, by~\eqref{linearGrowthManifold}, \begin{align*}
    |u(z)| &\le C(\alpha) E^{\frac{1}{2}} d.
\end{align*}

Finally, $u$ has a fixed sign and is harmonic in $B_d(z)$.
Applying the interior gradient estimate to the positive
harmonic function $u$ or $-u$, as appropriate, gives
\begin{align*}
    |\nabla u(z)|
    &\leq
    \frac{C}{d}|u(z)|
    \leq
    C(\alpha)E^{1/2}.
\end{align*}
Together with the estimate in the case
$\operatorname{dist}_g(z,\{u=0\})\geq\varrho$, this proves
\eqref{normalizedManifoldLipschitzEstimate} and the theorem.
\end{proof}

\begin{remark}
The same argument used to prove the Lipschitz estimate in Theorem~\ref{mfldregularitythmChapter1} can be used to recover the aforementioned theorem of Gemmer, Moon, and Raynor \cite{GMR}, who proved that the Lipschitz estimate holds up to the Neumann boundary of a bounded planar convex body. Theorem~\ref{planaralmostmonPoleBoundaryChapter1} replaces Theorem~\ref{mainthm2DmanifoldsChapter1}. For the bounded convex domain case, one applies Theorem~\ref{planaralmostmonPoleBoundaryChapter1} by working locally near the boundary and extending the convex domain far away so that it contains a cone of opening angle depending on only some geometric parameters of the convex set, such as inradius and outradius. The replacement of the quasisymmetry of $Q$, Theorem~\ref{mfldquasisymmetry}, is accomplished as follows. First, one observes that the conformal map $\varphi: \mathbb{H} \to \Omega$ extends to a quasiconformal map $\widehat{\varphi}: \mathbb{C} \to \mathbb{C}$ with distortion bounded in terms of $\alpha$, by the Ahlfors-Beurling theorem \cite{BAhlfors} or an elementary reflection argument. That $\widehat{\varphi}$ is $\eta_{\alpha}-$quasisymmetric for some increasing homeomorphism $\eta_{\alpha}: [0, \infty) \to [0, \infty)$ then follows by a theorem of Tukia and V\"ais\"al\"a \cite{TukiaVaisala}. Finally, to accomodate Neumann boundary conditions, the linear growth estimate Lemma~\ref{interiorACFgrowthM} is replaced by the corresponding estimate in Theorem 7.1 and Proposition 7.2 of Beck, Jerison, and Raynor \cite{BJR}.
\end{remark}

\section{Acknowledgements}

This work constitutes the main results of the author's PhD thesis, completed under David Jerison at MIT. The author expresses his sincere gratitude to David Jerison for his insight, encouragement, and mathematical vision pertaining to this work. The author acknowledges financial support from Simons Foundation Collaboration Grant 601948 DJ. The main results presented here are generated by the author. ChatGPT Pro was used to assist with the proof of Theorem~\ref{mfldquasisymmetry}. Otherwise, it was used to simplify proofs, check technical steps, aid in exposition, and find references.

\bibliographystyle{alpha}
\bibliography{biblio.bib}
\end{document}